\documentclass{article}
\usepackage{amssymb,amsmath,amsthm,mathrsfs,tikz-cd, comment}
\usepackage[title]{appendix}

\title{A function field analogue of Ligozat’s theorem for Drinfeld modular units}
\author{Sheng-Yang Kevin Ho}

\date{}

\newcommand{\Gal}{\operatorname{Gal}}
\newcommand{\SL}{\operatorname{SL}}
\newcommand{\GL}{\operatorname{GL}}

\newcommand{\Div}{\operatorname{Div}}

\newcommand{\divisor}{\operatorname{div}}
\newcommand{\cusp}{\operatorname{cusp}}
\newcommand{\ord}{\operatorname{ord}}

\newcommand{\CC}{\mathcal{C}}
\newcommand{\TT}{\mathcal{T}}

\newcommand{\p}{\mathfrak{p}}
\newcommand{\q}{\mathfrak{q}}
\newcommand{\n}{\mathfrak{n}}
\newcommand{\m}{\mathfrak{m}}

\newcommand{\pdiv}{\mid\!\mid}

\theoremstyle{plain}
\newtheorem{thm}{Theorem}[section]
\newtheorem{lem}[thm]{Lemma}
\newtheorem{conj}[thm]{Conjecture}

\newtheorem{prop}[thm]{Proposition}

\theoremstyle{definition}
\newtheorem{defn}[thm]{Definition}
\newtheorem{Ex}[thm]{Example}

\theoremstyle{remark}
\newtheorem{rem}[thm]{Remark}

\NewDocumentCommand{\tens}{e{_^}}{%
  \mathbin{\mathop{\otimes}\displaylimits
    \IfValueT{#1}{_{#1}}
    \IfValueT{#2}{^{#2}}
  }%
}

\begin{document}

\maketitle

\begin{abstract}
Fix a nonzero level $\mathfrak{n} \in \mathbb{F}_q[T]$. In this paper, we first establish a function field analogue of Ligozat’s theorem, which serves as our main result and provides a criterion for Drinfeld modular units on the Drinfeld modular curve $X_0(\mathfrak{n})$. We further conjecture that this criterion characterizes all Drinfeld modular units; we verify the conjecture in the cases of prime power level and of level equal to the product of two primes.

Second, as an application of Drinfeld modular units, we investigate the rational cuspidal divisor class group $\mathcal{C}(\mathfrak{n})$ of $X_0(\mathfrak{n})$. We construct an injective map $g$ from the group of degree $0$ rational cuspidal divisors on $X_0(\mathfrak{n})$ to the group of Drinfeld modular units on $X_0(\mathfrak{n})$ tensored with $\mathbb{Q}$ over $\mathbb{Z}$. As a result, we establish an explicit upper bound for the exponent of $\mathcal{C}(\mathfrak{n})$ for general level $\n$.
\end{abstract}

\section{Introduction}
Classically, Ligozat’s theorem, established in $1975$, completely describes the modular units on the modular curves $X_0(N)$, $N\in \mathbb{N}$, in terms of the eta quotients. Here, $X_0(N)$ is the modular curve over $\mathbb{Q}$ with $X_0(N)_{\mathbb{C}} = \Gamma_0(N)\backslash (\mathbb{H}\cup \mathbb{P}^1(\mathbb{Q}))$, where $\mathbb{H}$ is the complex upper half plane and $\Gamma_0(N)$ is the congruence subgroup of $\SL_2(\mathbb{Z})$ consisting of matrices that are upper-triangular modulo $N$. Recall that the Dedekind eta function is defined on $\mathbb{H}$, as a $24$-th root of the classical discriminant function, by
$$\eta(z) := e^{\frac{\pi i z}{12}}\prod_{n=1}^{\infty}(1-e^{2\pi i n z}) = q^{\frac{1}{24}}\prod_{n=1}^{\infty}(1-q^n),$$ where $q = e^{2\pi i z}$ and $z\in \mathbb{H}$. The eta quotients of level $N$ are
$$g_{\underline{r}}(z) := \prod_{0 < a|N}\eta_a(z)^{r_a},$$ where $\eta_a(z) := \eta(az)$, and $\underline{r} = (r_a)$ is a family of integers $r_a\in\mathbb{Z}$ indexed by $0 < a\mid N$.

\begin{thm} (Ligozat {\cite[Proposition 3.2.1]{Ligozat_1975}})
Let $N$ be a positive integer. Then $g_{\underline{r}}$ is a modular function on $X_0(N)$, i.e., a meromorphic function on $\mathbb{H}\cup \mathbb{P}^1(\mathbb{Q})$ invariant under the action of $\Gamma_0(N)$, if and only if the following conditions are true:
\begin{enumerate}
\item $\displaystyle\sum_{0 < a|N} r_{a} \cdot a\equiv 0 \mod 24$.
\item $\displaystyle\sum_{0 < a|N} r_{a} \cdot (N/a)\equiv 0 \mod 24$.
\item $\displaystyle\sum_{0 < a|N} r_{a} = 0$.
\item $\displaystyle\prod_{0 < a|N} a^{r_a}\in (\mathbb{Q}^\ast)^2$ (the square of a rational number).
\end{enumerate}
\end{thm}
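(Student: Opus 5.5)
We would prove Ligozat's criterion from the transformation law of $\eta$ under $\SL_2(\mathbb{Z})$. Recall that for $\gamma=\begin{pmatrix}\alpha&\beta\\ \gamma_0&\delta\end{pmatrix}\in\SL_2(\mathbb{Z})$ with $\gamma_0>0$,
$$\eta(\gamma z)=\epsilon(\gamma)\,(-i(\gamma_0 z+\delta))^{1/2}\eta(z),$$
where $\epsilon(\gamma)$ is an explicit $24$-th root of unity given by Dedekind sums. For $\gamma\in\Gamma_0(N)$ and $a\mid N$, write $a\gamma z=\gamma_a(az)$ with
$$\gamma_a=\begin{pmatrix}\alpha&a\beta\\ \gamma_0/a&\delta\end{pmatrix}\in\SL_2(\mathbb{Z}).$$
Then
$$g_{\underline r}(\gamma z)=\prod_a \epsilon(\gamma_a)^{r_a}\,(-i(\gamma_0 z+\delta))^{\frac12\sum r_a}\,g_{\underline r}(z).$$
Hence weight zero, i.e.\ invariance up to a constant, forces condition (3). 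The proof then reduces to showing that $\prod_a\epsilon(\gamma_a)^{r_a}=1$ for all $\gamma\in\Gamma_0(N)$ if and only if (1), (2) and (4) hold. Meromorphy at the cusps is automatic, since $\eta$ has $q$-expansions at every cusp.

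For necessity, we would test on specific elements. Taking $\gamma=T=\begin{pmatrix}1&1\\0&1\end{pmatrix}$ gives $g_{\underline r}(z+1)=e^{2\pi i\sum r_a a/24}g_{\underline r}(z)$, which yields (1). Applying the Fricke involution $W_N$ sends $\eta_a$ to a constant times $\eta_{N/a}$, up to a power of $z$ that cancels by (3). It conjugates $\Gamma_0(N)$ to itself and $T$ to $\begin{pmatrix}1&0\\-N&1\end{pmatrix}$, so the same computation at the cusp $0$ gives (2). For (4), we would use the explicit formula
$$\epsilon(\gamma)=\left(\frac{\delta}{\gamma_0}\right)\exp\Big(\frac{2\pi i}{24}\big((\alpha+\delta)\gamma_0-\beta\delta(\gamma_0^2-1)-3\gamma_0\big)\Big)$$
for $\gamma_0$ odd, with the analogous form for $\delta$ odd. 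The product then becomes $\prod_a\left(\frac{\delta}{\gamma_0/a}\right)^{r_a}$ times an exponential whose exponent is linear in the sums $\sum r_a a$ and $\sum r_a/a$ (the latter equals $\frac1N\sum r_a(N/a)$). Under (1)--(3) the exponential factor is trivial, since the congruences mod $24$ handle the $\gamma_0\delta$ and $\beta\delta\gamma_0^2$ terms, using that $\delta^2\equiv1\bmod 24$ when $\gcd(\delta,6)=1$. What remains is the character
$$\gamma\mapsto\prod_a\left(\frac{\delta}{\gamma_0/a}\right)^{r_a}=\left(\frac{\delta}{\gamma_0}\right)^{\sum r_a}\left(\frac{\prod_a a^{r_a}}{\delta}\right)\quad\text{(up to reciprocity signs)}.$$
By (3) this is the quadratic character $\delta\mapsto\left(\frac{\prod a^{r_a}}{\delta}\right)$. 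It is trivial on all $\delta$ prime to $N$ occurring in $\Gamma_0(N)$ exactly when $\prod a^{r_a}$ is a rational square, which gives (4) in both directions.

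Sufficiency runs the same computation backwards. Conditions (1)--(3) kill the exponential part, and (4) kills the Jacobi-symbol part, so $g_{\underline r}$ is $\Gamma_0(N)$-invariant. The main obstacle is the bookkeeping with $\epsilon(\gamma_a)$. The formula differs according to the parity of $\gamma_0/a$ and the sign of $\gamma_0$, and the Jacobi-symbol manipulation (quadratic reciprocity with sign conventions, together with the reduction from $\left(\frac{\delta}{\gamma_0/a}\right)$ to $\left(\frac{a}{\delta}\right)$) must be done carefully. We would first reduce to $\gamma$ with $\gamma_0>0$ and $\delta$ odd, and $\delta$ prime to $6$ when needed, by multiplying by powers of $T$ and by $-I$, since these generate enough of $\Gamma_0(N)$ modulo the elements already handled. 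Only then would we apply the odd-denominator formula uniformly.
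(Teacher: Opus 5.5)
The paper gives no proof of this theorem; it quotes it from Ligozat as the classical model. So there is nothing internal to match. Your argument is the standard proof via the multiplier system of $\eta$, and it is correct in outline.

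It has the same architecture as the paper's proof of Theorem~\ref{Main Theorem}:
\begin{itemize}
\item the weight forces $\sum r_a=0$;
\item the rest of the multiplier has a root-of-unity part governed by $\sum r_a a$ and $\sum r_a (N/a)$, playing the role of the $\det(\gamma)$-power there;
\item what remains is the quadratic character $\delta\mapsto\left(\frac{s}{\delta}\right)$ with $s=\prod a^{r_a}$, the analogue of $\prod\chi_{\mathfrak{a}}^{-r_{\mathfrak{a}}}$, whose triviality is equivalent to $s$ being a $(q-1)$-th power.
\end{itemize}
The classical case is easier because $\eta$ is already a holomorphic $24$-th root of $\Delta$ with an explicit multiplier. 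The paper, by contrast, must build $(q-1)(q^2-1)$-th roots of Delta quotients from the $D_{\mathfrak{n}}$ and from $D_{\p\q}(\gamma' z)$.

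There are some details to fix when you write it out.
\begin{enumerate}
\item Your displayed $\epsilon(\gamma)$ is the multiplier for the normalization $\eta(\gamma z)=\epsilon(\gamma)(\gamma_0 z+\delta)^{1/2}\eta(z)$. With $(-i(\gamma_0z+\delta))^{1/2}$ it is off by $e^{-\pi i/4}$; take $\gamma=S$, where your formula gives $e^{-\pi i/4}$ but $\epsilon(S)=1$. This is harmless, since the total discrepancy is $e^{-\pi i\sum r_a/4}=1$ by (3).
\item Multiplying by powers of $T$ or by $-I$ never changes the parity of $\gamma_0$. For even $N$ some $\gamma_0/a$ are always even, so the formula you apply uniformly must be the $\delta$-odd one:
\[
\epsilon=\left(\frac{\gamma_0}{\delta}\right)e^{\pi i[\alpha\gamma_0(1-\delta^2)+\delta(\beta-\gamma_0+3)]/12},\qquad \gamma_0>0,\ \delta>0 \text{ odd}.
\]
This can be arranged using $-I$ and $T^n$. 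It is also cleaner: multiplicativity in the numerator gives $\prod_a\left(\frac{\gamma_0/a}{\delta}\right)^{r_a}=\left(\frac{s}{\delta}\right)$ directly, with no reciprocity. If you do use the $\gamma_0$-odd formula for odd $N$, the reciprocity signs are $(-1)^{\frac{\delta-1}{2}\cdot\frac12\sum r_a a}$, which is trivial by (1) and (3).
\item Every term of the summed exponent carries a factor of $\sum r_a a$, of $\gamma_0\sum r_a/a=\frac{\gamma_0}{N}\sum r_a (N/a)$, or of $\sum r_a$. So $\delta^2\equiv 1 \bmod 24$ and the reduction to $\gcd(\delta,6)=1$ are not needed.
\item For the necessity of (4), spell out the character argument. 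Write $s=s_0m^2$ with $s_0>1$ squarefree. The primes of $s_0$ divide $N$, and $\left(\frac{s_0}{\cdot}\right)$ is a nontrivial character modulo $4s_0$ on odd $\delta$. By CRT choose $\delta>0$ odd and prime to $N$ with $\left(\frac{s_0}{\delta}\right)=-1$. Then complete the pair $(N,\delta)$ to the bottom row of an element of $\Gamma_0(N)$; on that element $g_{\underline r}$ changes sign.
\end{enumerate}
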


\begin{rem}
A modular function on $X_0(N)$ which does not have zeros or poles on $\mathbb{H}$ is called a modular unit on $X_0(N)$. Since the Dedekind eta function $\eta(z)$ is non-vanishing and holomorphic on $\mathbb{H}$, any modular function $g_{\underline{r}}$ on $X_0(N)$ obtained in the above theorem is a modular unit on $X_0(N)$.
\end{rem}

In the function field analogue, let $\mathbb{F}_q$ be a finite field of characteristic $p$ with $q$ elements. Let $A = \mathbb{F}_q[T]$ denote the polynomial ring in the indeterminate $T$ over $\mathbb{F}_q$, and $K = \mathbb{F}_q(T)$ represent the rational function field. Denote $A_+$ the set of monic (non-zero) polynomials in $A$. An irreducible element in $A_+$ is called a prime. Define $K_\infty = \mathbb{F}_q((\pi_\infty))$ as the completion of $K$ at the infinite place, where $\pi_\infty := T^{-1}$. Let $|\cdot| = |\cdot|_\infty$ denote the normalized absolute value on $K_\infty$ with $|T|_\infty := q$. Let $\mathbb{C}_\infty$ denote the completion of an algebraic closure of $K_\infty$. Define $\Omega = \mathbb{C}_\infty - K_\infty$ as the Drinfeld upper half plane. 

Fix $\n\in A_+$. The level-$\n$ Hecke congruence subgroup of $\GL_2(A)$ is
$$\Gamma_0(\n):=\left\{\begin{pmatrix}
    a & b\\ c & d
\end{pmatrix}\in \GL_2(A)~\middle|~c\equiv 0~\text{mod}~\n\right\}.$$ Let $\Gamma_0(\n)$ act on $\Omega$ by linear fractional transformations. Drinfeld proved in \cite{drinfeld_elliptic_1974} that the quotient $\Gamma_0(\n)\backslash \Omega$ is the space of $\mathbb{C}_\infty$-points of an affine curve $Y_0(\n)$ defined over $K$, which is a moduli space of rank-$2$ Drinfeld modules. The unique smooth projective curve over $K$ containing $Y_0(\n)$ as an open subvariety is denoted by $X_0(\n)$, which is called the Drinfeld modular curve of level $\n$. Denote $\mathcal{O}(\Omega)^\ast$ the group of non-vanishing holomorphic rigid-analytic functions on $\Omega$, and let $\Delta(z)\in \mathcal{O}(\Omega)^\ast$ be the Drinfeld discriminant function defined in \cite[p. 183]{gekeler_1997}. For $\mathfrak{a}\in A_+$, denote $\Delta_{\mathfrak{a}}(z):=\Delta(\mathfrak{a}z)\in \mathcal{O}(\Omega)^\ast$.

We establish the following theorem, which is our main result.
\begin{thm} [A function field analogue of Ligozat’s theorem]\label{Main Theorem}
Fix $\n\in A_+$. Let $$g_{\underline{r}}(z) := \prod_{\substack{\mathfrak{a}|\n\\ \mathfrak{a}\in A_+}}\Delta_{\mathfrak{a}}^{r_{\mathfrak{a}}},$$ called a Delta quotient on $X_0(\n)$, where $\underline{r} = (r_{\mathfrak{a}})$ is a family of integers $r_{\mathfrak{a}}\in\mathbb{Z}$ indexed by monic $\mathfrak{a}\mid \n$. Then $g_{\underline{r}}$ has a $(q-1)(q^2-1)$th root as a modular function on $X_0(\n)$, i.e., a meromorphic function on $\Omega\cup \mathbb{P}^1(K)$ invariant under the action of $\Gamma_0(\n)$, if and only if the following conditions are true:
\begin{enumerate}
\item $\displaystyle\sum_{\substack{\mathfrak{a}|\n\\ \mathfrak{a}\in A_+}} r_{\mathfrak{a}} \cdot |\mathfrak{a}|\equiv 0 \mod (q-1)(q^2-1)$.
\item $\displaystyle\sum_{\substack{\mathfrak{a}|\n\\ \mathfrak{a}\in A_+}} r_{\mathfrak{a}} \cdot |\n/\mathfrak{a}|\equiv 0 \mod (q-1)(q^2-1)$.
\item $\displaystyle\sum_{\substack{\mathfrak{a}|\n\\ \mathfrak{a}\in A_+}} r_{\mathfrak{a}} = 0$.
\item $\displaystyle\prod_{\substack{\mathfrak{a}|\n\\ \mathfrak{a}\in A_+}} {\mathfrak{a}}^{r_{\mathfrak{a}}}\in (K^\ast)^{q-1}$.
\end{enumerate}
\end{thm}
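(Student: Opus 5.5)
The proof will rest on Gekeler's description of the transformation behaviour of $\Delta$ under $\GL_2(A)$ and of its canonical roots. Recall from \cite{gekeler_1997} that $\Delta$ admits a $(q-1)(q^2-1)$th root $\eta$ up to a constant, and that $\Delta$ itself has a $(q-1)$th root $h$. The twisted functions satisfy $\eta(\gamma z)=\omega(\gamma)(cz+d)^{(q^2-1)/(q-1)\cdot\ldots}\eta(z)$, where $\omega$ is a character of $\GL_2(A)$ factoring through $\GL_2(A)^{ab}$. Here $\GL_2(A)^{ab}$ is controlled by $\det$ together with a homomorphism to $\mathbb{Z}/(q+1)$ coming from the quotient of the Bruhat--Tits tree.

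The plan is to form $G=\prod_{\mathfrak{a}}\eta_{\mathfrak{a}}^{r_{\mathfrak{a}}}$, which is a candidate $(q-1)(q^2-1)$th root of $g_{\underline r}$ determined up to roots of unity, and to decide when some choice of $G$ is $\Gamma_0(\n)$-invariant and meromorphic at the cusps. Condition 3 is exactly the requirement that the automorphy weight $\sum r_{\mathfrak{a}}\cdot(\text{weight of }\Delta)$ vanish. For $\gamma=\begin{pmatrix}a&b\\c&d\end{pmatrix}\in\Gamma_0(\n)$ we have $\mathfrak{a}\gamma z=\gamma_{\mathfrak{a}}(\mathfrak{a}z)$ with $\gamma_{\mathfrak{a}}=\begin{pmatrix}a&\mathfrak{a}b\\c/\mathfrak{a}&d\end{pmatrix}\in\GL_2(A)$. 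Hence $G(\gamma z)=\prod\omega(\gamma_{\mathfrak{a}})^{r_{\mathfrak{a}}}\,G(z)$ once the weight factors cancel.

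The core step is computing the character $\gamma\mapsto\prod_{\mathfrak{a}}\omega(\gamma_{\mathfrak{a}})^{r_{\mathfrak{a}}}$ on $\Gamma_0(\n)$ and showing it is trivial if and only if conditions 1, 2 and 4 hold.
\begin{itemize}
\item The $\det$-part of $\omega$ contributes a factor $\chi(\det\gamma)^{\sum r_{\mathfrak{a}}}$, which is killed by condition 3.
\item The $\mathbb{Z}/(q+1)$ part, together with the root-of-unity ambiguity, should be read off from the expansions of $\eta_{\mathfrak{a}}$ at the cusps $\infty$ and $0$. At $\infty$ the order of $\Delta_{\mathfrak{a}}$ in the parameter $t_\infty$ is proportional to $|\mathfrak{a}|$. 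Using the Atkin--Lehner involution $W_\n$, which sends $\Delta_{\mathfrak{a}}$ to a constant times $\Delta_{\n/\mathfrak{a}}$, the order at $0$ is proportional to $|\n/\mathfrak{a}|$.
\item Condition 1 (resp. 2) is then the requirement that $G$ have integral order in the local parameter at $\infty$ (resp. $0$). Equivalently, the monodromy under the stabilizer $\begin{pmatrix}1&A\\0&1\end{pmatrix}$ and the diagonal torus $\begin{pmatrix}\mathbb{F}_q^*&0\\0&\mathbb{F}_q^*\end{pmatrix}$ is trivial.
\item Since $\Gamma_0(\n)$ is generated modulo its commutator by these cusp stabilizers and the image of the determinant (to be justified via the tree quotient, or by Gekeler's computation of $\Gamma_0(\n)^{ab}$ mod torsion), triviality on these elements suffices.
\end{itemize}

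Condition 4 enters through the constants. $W_\n$ and the matrices $\begin{pmatrix}\mathfrak{a}&0\\0&1\end{pmatrix}$ introduce factors $\mathfrak{a}^{\pm 1/(q-1)}$, coming from $\Delta(\mathfrak{a}z)$ versus $\Delta(z)$ having leading coefficients involving $\mathfrak{a}^{q^2-1}$ twisted by the $(q-1)$th root of $\Delta$. A $K$-rational, Galois-stable root therefore exists exactly when $\prod\mathfrak{a}^{r_{\mathfrak{a}}}\in(K^*)^{q-1}$. Alternatively, condition 4 can be seen as the condition that the $\det$-twist character on $\Gamma_0(\n)$ be trivial: on $\gamma_{\mathfrak{a}}$ versus $\gamma$ it produces the Legendre-type symbol $\left(\frac{\mathfrak{a}}{d}\right)_{q-1}$, and $(q-1)$th power reciprocity in $A$ converts its triviality for all $d$ into condition 4.

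Necessity should follow by evaluating the character on specific elements: translations, diagonal matrices, and $\gamma$ with $d$ ranging over primes in prescribed residue classes. This yields each condition separately.

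I expect the main obstacle to be the precise computation of the character $\omega$ of $\GL_2(A)$ attached to the $(q-1)(q^2-1)$th root of $\Delta$, and its behaviour under conjugation by $\operatorname{diag}(\mathfrak{a},1)$. In particular, the $(q-1)$th power residue symbol has to be extracted cleanly, as the analogue of the Jacobi symbol in Ligozat's proof via Newman's theorem. For this I would first try to use Gekeler's explicit formula for the $(q-1)$th root $h$ of $\Delta$, whose character is the determinant, and combine it with the $(q+1)$-part coming from $\Delta(\mathfrak{a}z)/\Delta(z)$ being a modular unit on $\Gamma_0(\mathfrak{a})$ with known divisor.
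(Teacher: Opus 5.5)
Your plan has a genuine gap: the holomorphic $(q-1)(q^2-1)$th root of $\Delta$ it starts from does not exist, so the existence of a root of $g_{\underline r}$, which is the main obstruction, is never addressed.

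\textbf{The premise fails.} $\Delta$ has \emph{no} $(q-1)(q^2-1)$th root in $\mathcal{O}(\Omega)^\ast$, even up to a constant. By \cite[Corollary 2.10]{gekeler_1997} its maximal root there is the $(q-1)$th root $h$. The would-be eta function $\widetilde{\eta}=t^{1/(q^2-1)}\prod_{\mathfrak a}f_{\mathfrak a}(t)$ is only a formal product, not a function on $\Omega$. Hence $\eta$, the $\eta_{\mathfrak a}$, the candidate $G=\prod_{\mathfrak a}\eta_{\mathfrak a}^{r_{\mathfrak a}}$ and the multiplier $\omega$ do not exist. Consistently, $\GL_2(A)$ has no quotient $\mathbb{Z}/(q+1)$: its characters into $\mathbb{C}_\infty^\ast$ are powers of $\det$, so there is no eta-type multiplier system to compute.

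This is not a technicality. Granting condition 3, $g_{\underline r}$ has a $(q-1)(q^2-1)$th root in $\mathcal{O}(\Omega)^\ast$ if and only if $q+1\mid m:=\sum_{\deg\mathfrak a\ \text{odd}}r_{\mathfrak a}$. This divisibility is a large part of what conditions 1 and 2 encode. Because you assume a root of each $\Delta_{\mathfrak a}$ separately, your argument never sees it, and the sufficiency direction has nothing to apply to. What does survive is necessity of conditions 1--3 for a hypothetical invariant root $f$. Weight forces 3. Comparing $\ord f$ with the orders $|\mathfrak a|$ and $|\n/\mathfrak a|$ of $\Delta_{\mathfrak a}$ at $[\infty]$ and $[0]$ forces 1 and 2.

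\textbf{The missing idea and how the paper proceeds.} The paper builds the root only from quotients that genuinely have roots. Using condition 3, it rewrites $g_{\underline r}$ as a product of the following factors:
\begin{itemize}
\item $\Delta/\Delta_{\mathfrak a}$ with $\deg\mathfrak a$ even, whose root is Gekeler's $D_{\mathfrak a}$;
\item $\Delta_{\p_i}/\Delta_{\mathfrak a}$ with $\p_i\mid\mathfrak a$ and $\deg\mathfrak a$ odd, whose root is $D_{\mathfrak a/\p_i}(\p_i z)$;
\item $\Delta_{\p_1}/\Delta_{\p_i}$ for odd-degree primes, whose root is $D_{\p_1\p_i}(\gamma'z)$ from \cite[Remark 6.4]{papikian_eisenstein_2015};
\item a leftover $(\Delta/\Delta_{\p_1})^{-m}$, whose maximal root is only a $(q-1)^2$th one. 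This is the source of the condition $q+1\mid m$.
\end{itemize}
Invariance is then an explicit character computation using Gekeler's $\chi_{\n}$ and $\det$, together with the new Lemma \ref{The transformation law for D_pq}. Since $(\det,\chi_{\p_1},\dots,\chi_{\p_s})$ maps $\Gamma_0(\n)$ onto $(\mathbb{F}_q^\ast)^{s+1}$, this yields explicit congruences. Condition 4 is exactly the triviality of $\prod_{\mathfrak a}\chi_{\mathfrak a}^{-r_{\mathfrak a}}$. No reciprocity law and no $K$-rationality or Galois argument is involved; the latter concerns a different question from $\Gamma_0(\n)$-invariance.

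\textbf{The reduction to cusp stabilizers also fails.} Every element of a cusp stabilizer in $\Gamma_0(\n)$ has finite order. But $\Gamma_0(\n)$ modulo its subgroup generated by torsion is free of rank $g(X_0(\n))$. So for $g\geq 1$ these stabilizers, together with $\det$, do not generate $\Gamma_0(\n)^{ab}$, even modulo torsion. That the relevant multipliers factor through $(A/\n)^\ast\times\mathbb{F}_q^\ast$ is Gekeler's theorem about the specific functions $D_{\n}$, not a consequence of a generation statement.
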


\begin{rem}
A modular function on $X_0(\n)$ which does not have zeros or poles on $\Omega$ is called a (Drinfeld) modular unit on $X_0(\n)$. Since the Drinfeld discriminant function $\Delta(z)$ is non-vanishing and holomorphic on $\Omega$, a modular function as a $(q-1)(q^2-1)$th root of $g_{\underline{r}}$ on $X_0(\n)$ obtained in the above theorem is a modular unit on $X_0(\n)$.
\end{rem}

The main difficulty of proving Theorem \ref{Main Theorem} is that the Drinfeld discriminant function $\Delta(z)$ only has a maximal $(q-1)$-th root in $\mathcal{O}(\Omega)^\ast$ by \cite[Corollary 2.10]{gekeler_1997}. If one tries to find a $(q-1)(q^2-1)$-th root (up to constant multiple) of $\Delta(z)$, there is only a formal product in $t^{\frac{1}{q^2-1}}$ by \cite{GEKELER_product_expansion} and \cite[(6.2)]{Gekeler1984}:
$$\widetilde{\eta}(z) := t^{\frac{1}{q^2-1}} \prod_{\substack{\mathfrak{a}\in A_+}}f_\mathfrak{a}(t),$$ which is not in $\mathcal{O}(\Omega)^\ast$. In \cite{GEKELER_product_expansion}, $\widetilde{\pi}A$ is the Carlitz period, $t := t(z) := \exp_{\widetilde{\pi}A}^{-1}(\widetilde{\pi}z)$, and $f_\mathfrak{a}$'s are specific polynomials over $\mathbb{C}_\infty$ derived from the Carlitz module.

Instead of directly finding a root of $\Delta(z)$, the key idea is as follows. For $\n\in A_+$, let $\Delta_{\n} (z) = \Delta(\n z)$. First, by \cite[Corollary 3.18]{gekeler_1997}, there is a maximal $k$-th root $D_{\n}$ (up to a constant multiple) of $\frac{\Delta}{\Delta_\n}$ in $\mathcal{O}(\Omega)^\ast$ with 
$$k = \begin{cases}
(q-1)(q^2-1),  & \text{if $\deg(\n)$ is even;} \\
(q-1)^2,  & \text{otherwise.}
\end{cases}$$
By \cite[Corollary 3.21]{gekeler_1997} and \cite[Lemma 2.2]{ho_rational_2024}, we have the transformation law under $\Gamma_0(\n\m)$ of $D_\n(\m z)$ for any $\m\in A_+$. Second, if $\p$ and $\q$ are distinct primes in $A_+$ of odd degree, then there exists a $(q-1)(q^2-1)$th root $f$ of $\frac{\Delta_\p}{\Delta_\q}$ in $\mathcal{O}(\Omega)^\ast$ by \cite[Remark 6.4]{papikian_eisenstein_2015}. In Lemma \ref{The transformation law for D_pq}, we establish the transformation law under $\Gamma_0(\p\q)$ of $f$. From the above, we explicitly construct a $(q-1)(q^2-1)$th root (up to a constant multiple) of a Delta quotient on $X_0(\n)$ in Section \ref{Sec: Proof of  Theorem} whenever such a root exists as a modular function on $X_0(\n)$.

In addition, we propose the following conjecture.
\begin{conj} (cf. {\cite[Theorem 3.6]{YOO_2023}}) \label{Kevin's conjecture}
Fix $\n\in A_+$. Let $$g_{\underline{r}}(z) := \prod_{\substack{\mathfrak{a}|\n\\ \mathfrak{a}\in A_+}}\Delta_{\mathfrak{a}}^{r_{\mathfrak{a}}},$$ where $\underline{r} = (r_{\mathfrak{a}})$ is a family of integers $r_{\mathfrak{a}}\in\mathbb{Z}$ indexed by monic $\mathfrak{a}\mid\n$. If $g_{\underline{r}}$ has a $k$-th root as a modular function on $X_0(\n)$, then $\frac{r_{\mathfrak{a}}}{k}\in \frac{1}{(q-1)(q^2-1)}\mathbb{Z}$ for all monic $\mathfrak{a}\mid\n$. In particular, if $\gcd(r_{\mathfrak{a}}:\text{~monic~}\mathfrak{a}|\n) = 1$, then $k\mid (q-1)(q^2-1)$.
\end{conj}

Evidence for this conjecture is provided by the following theorem, which is proved in Section \ref{Sec: Applications}, relying on the results of \cite{ho_rational_2024} and \cite{papikian_eisenstein_2015} in the cases $\n = \p^r$ or $\n = \p\q$, respectively.

\begin{thm} \label{Evidence for Kevin's Conjecture}
Conjecture \ref{Kevin's conjecture} is true for the cases $\n = \p^r$ or $\n = \p\q$, where $\p$ and $\q$ are distinct primes in $A_+$ and $r\geq 1$.
\end{thm}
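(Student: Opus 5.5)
The plan is to reduce the conjecture to a statement about the divisor of $g_{\underline{r}}$ supported on the cusps. Then we read off the integrality constraint from known computations of the orders of vanishing of $\Delta_{\mathfrak{a}}$ at the cusps of $X_0(\n)$ when $\n=\p^r$ or $\n=\p\q$.

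Suppose $g_{\underline{r}}=f^k$ with $f$ a modular function on $X_0(\n)$. Since $g_{\underline{r}}$ has no zeros or poles on $\Omega$, the same holds for $f$, so $f$ is a modular unit. Its divisor is $\frac{1}{k}\divisor(g_{\underline{r}})$, and this divisor must be integral at every cusp. For each cusp $c$, the order $\ord_c(\Delta_{\mathfrak{a}})$ is an explicit quantity (as computed by Gekeler): up to the ramification index it is $|\gcd(\mathfrak{a},\cdot)|^2/|\mathfrak{a}|$-type data. So $\ord_c(g_{\underline{r}})=\sum_{\mathfrak{a}} r_{\mathfrak{a}}\ord_c(\Delta_{\mathfrak{a}})$ is a linear form in $\underline{r}$, given by a square matrix $M$ indexed by the monic divisors of $\n$ and the cusp classes. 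These have the same cardinality when $\n=\p^r$ or $\n=\p\q$, since the cusps are indexed by divisors and are rational in these cases.

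The key input is the structure of the group of modular units in these two cases. For $\n=\p^r$, \cite{ho_rational_2024} shows that the rational cuspidal divisor class group is generated by explicit cuspidal divisors, computes its order, and determines that every modular unit is, up to a constant, a product of the roots $D_{\p^i}(\p^j z)$ of $\Delta$-quotients. For $\n=\p\q$, \cite{papikian_eisenstein_2015} gives the analogous description, including the root $f$ of $\Delta_\p/\Delta_\q$ from \cite[Remark 6.4]{papikian_eisenstein_2015}. In each case we get an explicit basis of the group of modular units modulo constants. We then express $\log$-divisors: $f$ is, up to a constant, $\prod_{\mathfrak{a}}\Delta_{\mathfrak{a}}^{s_{\mathfrak{a}}}$ with $s_{\mathfrak{a}}\in\mathbb{Q}$. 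Since $M$ is invertible over $\mathbb{Q}$, the divisor determines the exponents, so $s_{\mathfrak{a}}=r_{\mathfrak{a}}/k$. It therefore suffices to check that every modular unit has exponents $s_{\mathfrak{a}}\in\frac{1}{(q-1)(q^2-1)}\mathbb{Z}$. We verify this on the basis elements supplied by those papers: each is a $(q-1)(q^2-1)$-th root or a $(q-1)^2$-th root of a $\Delta$-quotient with integer exponents, and $(q-1)^2\mid (q-1)(q^2-1)$. By Theorem \ref{Main Theorem}, every basis element arises this way.

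The "in particular" clause follows formally. If $\frac{r_{\mathfrak{a}}}{k}(q-1)(q^2-1)\in\mathbb{Z}$ for all $\mathfrak{a}$, then $k$ divides $r_{\mathfrak{a}}(q-1)(q^2-1)$ for every $\mathfrak{a}$, hence divides their gcd, which is $(q-1)(q^2-1)$.

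The main obstacle is the middle step: we must make sure the cited results really describe the full group of modular units and not just a finite-index subgroup, for example only the cuspidal divisor group modulo principal divisors. If only the order of $\CC(\n)$ is available, the fallback is to compare indices. We compare the lattice $L$ spanned by the $\Delta$-roots constructed via Theorem \ref{Main Theorem} with the lattice of all principal cuspidal divisors. The known order of $\CC(\n)$ equals the index of $\divisor(L)$ in the degree-zero cuspidal divisors, and this forces equality of the two lattices.
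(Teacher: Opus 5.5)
Your reduction is the same as the paper's. Because $\Lambda(\n)$ is invertible over $\mathbb{Q}$, a modular unit with $K$-rational cuspidal divisor $C$ is determined up to a constant by the exponent vector $\Lambda(\n)^{-1}[C]$; this is exactly the map $g$. So the conjecture amounts to showing that $(q-1)(q^2-1)\Lambda(\n)^{-1}$ sends the lattice $P$ of principal divisors in $\Div_{\cusp}^0(X_0(\n))(K)$ to integer vectors, and it suffices to check this on generators of $P$.

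\textbf{The gap: how the generators are certified.} What \cite{ho_rational_2024} and \cite{papikian_eisenstein_2015} supply is the structure of $\CC(\n)$: a $\mathbb{Z}$-basis $\mathcal{B}$ (resp.\ $\mathcal{B}'$) with $\CC(\n)=\bigoplus_C\langle\overline{C}\rangle$, together with the orders $\ord(\overline{C})$. Hence $P=\bigoplus_C\mathbb{Z}\,\ord(\overline{C})C$. The real work is checking that $(q-1)(q^2-1)\ord(\overline{C})\Lambda(\n)^{-1}[C]$ has integer entries for each such $C$, using the explicit tensor-product formula for $\Lambda(\n)^{-1}$; this is what the paper does.

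You instead assert that those papers show every unit is a product of the $D_{\p^i}(\p^j z)$ (and the root of $\Delta_\p/\Delta_\q$). Since the $D$'s are $(q-1)(q^2-1)$-th or $(q-1)^2$-th roots, that assertion is essentially the conjecture restated. You then justify it by ``By Theorem \ref{Main Theorem}, every basis element arises this way,'' which is circular. Theorem \ref{Main Theorem} only decides, for a given integral $\underline{r}$, whether $g_{\underline{r}}$ has a modular $(q-1)(q^2-1)$-th root. It says nothing about whether an arbitrary modular unit, e.g.\ the one with divisor $\ord(\overline{C})C$, is such a root. That is precisely Conjecture \ref{Kevin's conjecture}.

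\textbf{The fallback.} The logic of your index argument is sound, and it would be a genuinely different route needing only $|\CC(\n)|$. If $L$ is the group of units produced by Theorem \ref{Main Theorem}, then $\divisor(L)\subseteq P$ and $[\Div_{\cusp}^0(X_0(\n))(K):P]=|\CC(\n)|$, so equal indices force $\divisor(L)=P$, which yields the conjecture. But you only assert $[\Div_{\cusp}^0(X_0(\n))(K):\divisor(L)]=|\CC(\n)|$. Proving it requires three steps:
\begin{enumerate}
\item compute the index of the lattice cut out by conditions 1--4 of Theorem \ref{Main Theorem} inside $\{\sum r_{\mathfrak{a}}=0\}$;
\item transport it through $\frac{1}{(q-1)(q^2-1)}\Lambda(\n)$ to the hyperplane $\{\sum a_d\deg(P(\n)_d)=0\}$;
\item compare the result with the orders in the cited papers.
\end{enumerate}
In these cases that equality is equivalent to the theorem, so it cannot be taken for granted.

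\textbf{A smaller slip.} For $\n=\p^r$ the cusps of height $\p^h$ with $0<h<r$ are in general not rational (see Proposition \ref{lem: degree of the rational cuspidal divisor}). Your square matrix must therefore be indexed by heights, i.e.\ by the divisors $(P(\n)_d)$, which works because $\ord(\Delta_{\mathfrak{a}})$ depends only on the height. For the same reason, ``every modular unit'' should be restricted to units with $K$-rational cuspidal divisor.
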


Fix $\n = \prod_{1\leq i\leq s}\p_i^{r_i}\in A_+$, where $\p_i$ are primes in $A_+$ and $r_i\geq 1$. By Gekeler \cite[(3.6)]{gekeler_1997}, the cusps of $X_0(\n)$ are in bijection with $\Gamma_0(\n)\backslash \mathbb{P}^1(K)$. Moreover, every cusp of $X_0(\n)$ has a representative $\begin{bmatrix} a \\ d \end{bmatrix}^{\n} := \begin{bmatrix} a \\ d \end{bmatrix}\in \Gamma_0(\n)\backslash \mathbb{P}^1(K)$, where $a,d\in A_+$, $d\mid \n$, and $\gcd(a,\n)=1$. Such a cusp is called of height $d$, independent of the choice of $a$. For a monic divisor $d$ of $\n$, let
$$[d]:=\begin{bmatrix} 1 \\ d \end{bmatrix}^{\n},$$ which is a cusp of $X_0(\n)$ of height $d$. In particular, $$[0]:=[1]:=\begin{bmatrix} 1 \\ 1 \end{bmatrix}^{\n}\text{~and~}[\infty]:=[\n]:=\begin{bmatrix} 1 \\ \n \end{bmatrix}^{\n}$$ are $K$-rational cusps of $X_0(\n)$ of height $1$ and $\n$, respectively. Denote $(P(\n)_d)$ the sum of all the cusps of $X_0(\n)$ of height $d\mid \n$. Then $(P(\n)_d)$ is $K$-rational in the sense that it is invariant under $\Gal(\overline{K}/K)$; see \cite[Proposition 6.3]{gekeler_invariants_2001}.

In Section \ref{Sec: Applications}, we prove the following.
\begin{prop}[cf. {\cite[Lemma 2.2]{ho_rational_2024}}] \label{lem: degree of the rational cuspidal divisor}
Let $\n = \prod_{i=1}^s \p_i^{r_i} \in A_+$ with $s\geq 1$. The degree of the rational cuspidal divisor $(P(\n)_d)$ of height $d = \prod_{i=1}^s \p_i^{h_i} \mid \n$ on $X_0(\n)$ is
$$
\begin{cases}
\displaystyle\frac{1}{q-1}\prod_{\substack{i=1\\ 0 < h_i < r_i}}^s (|\p_i|-1)|\p_i|^{\min\{h_i, r_i-h_i\}-1},  & \text{if there exists $0 < h_i < r_i$;}\\
1,  & \text{otherwise.}
\end{cases}
$$
\end{prop}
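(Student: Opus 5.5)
The plan is to compute the degree of $(P(\n)_d)$ by counting geometric points. Since $(P(\n)_d)$ is by definition the sum of all cusps of height $d$ (over $\overline{K}$ or $\mathbb{C}_\infty$), each with multiplicity one, its degree equals the number of elements of $\Gamma_0(\n)\backslash\mathbb{P}^1(K)$ of height $d$. So the proof reduces to counting these classes, and I would do it in three steps.

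\emph{Step 1: parametrize the cusps of height $d$.} By Gekeler \cite[(3.6)]{gekeler_1997}, every cusp has a representative $\begin{bmatrix} a \\ d\end{bmatrix}^{\n}$ with $a,d\in A_+$, $d\mid\n$ and $\gcd(a,\n)=1$. Set $e=\gcd(d,\n/d)$. I would show that two such representatives $\begin{bmatrix} a \\ d\end{bmatrix}^{\n}$ and $\begin{bmatrix} a' \\ d\end{bmatrix}^{\n}$ are $\Gamma_0(\n)$-equivalent if and only if $a'\equiv \varepsilon a \bmod e$ for some $\varepsilon\in\mathbb{F}_q^\ast$.
\begin{itemize}
\item For the "only if" direction, apply $\gamma=\begin{pmatrix} \alpha&\beta\\ \n\gamma'&\delta\end{pmatrix}\in\Gamma_0(\n)$ to the column vector $(a,d)^T$. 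One needs that $\gamma(a,d)^T$ equals $(a',d)^T$ up to a scalar in $K^\ast$. Comparing the second coordinates modulo $d$ and using the primitivity of the vectors forces that scalar into $\mathbb{F}_q^\ast$ and gives $\delta\equiv\varepsilon \bmod d$. Reducing the first coordinate modulo $e$ then gives $a'\equiv\varepsilon\alpha a$. Finally, $\alpha\delta\equiv\det\gamma\in\mathbb{F}_q^\ast \pmod{\n}$, so $\alpha\equiv$ constant mod $d$.
\item For the "if" direction, given $a'\equiv \varepsilon a\bmod e$, construct an explicit $\gamma$ by the Chinese remainder theorem, as in the classical proof for $\Gamma_0(N)$ (e.g.\ Diamond--Shurman). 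The freedom in $\det\gamma\in\mathbb{F}_q^\ast$, which $\GL_2$ allows, is exactly what produces the extra scalar $\varepsilon$.
\end{itemize}
This yields a bijection between the cusps of height $d$ and $(A/e)^\ast/\mathbb{F}_q^\ast$. Every class is hit, because any unit mod $e$ lifts to a monic $a$ coprime to $\n$.

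\emph{Step 2: count.} If $\deg e\ge 1$, then $\mathbb{F}_q^\ast$ injects into $(A/e)^\ast$ and so acts freely. The count is therefore $\varphi(e)/(q-1)$, where $\varphi$ is the function-field Euler function. If $e=1$, the count is $1$. Now $e=\prod_i \p_i^{\min\{h_i,r_i-h_i\}}$, and $\min\{h_i,r_i-h_i\}>0$ exactly when $0<h_i<r_i$. By the Chinese remainder theorem and $|(A/\p^m)^\ast|=(|\p|-1)|\p|^{m-1}$, this gives
\[
\varphi(e)=\prod_{0<h_i<r_i}(|\p_i|-1)|\p_i|^{\min\{h_i,r_i-h_i\}-1}.
\]
This is precisely the formula in the statement. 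The case "otherwise" corresponds exactly to $e=1$.

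\emph{Main obstacle.} The delicate step is Step 1: getting the equivalence relation exactly right, namely modulo $e$ and up to $\mathbb{F}_q^\ast$ rather than up to $\pm1$ as in the classical case. In particular, the scalar ambiguity in $\mathbb{P}^1(K)$ and the determinant in $\mathbb{F}_q^\ast$ must combine to a single factor $\varepsilon$. I would first settle this locally at one prime power, where $\n=\p^r$ is handled in \cite[Lemma 2.2]{ho_rational_2024}. I would then glue the prime-power cases via the Chinese remainder theorem, checking that the $\mathbb{F}_q^\ast$-quotient is taken once globally rather than at each prime. That global-versus-local distinction is why the factor $\frac{1}{q-1}$ appears only once in the formula.
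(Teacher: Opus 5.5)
\textbf{Verdict.} Your proof is correct, apart from a small slip noted below, but it takes a different route from the paper.

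\textbf{The paper's route.} The paper never counts $\Gamma_0(\n)$-orbits. It sets $C_d=(P(\n)_d)-\deg(P(\n)_d)\cdot[\infty]$ and applies the bridge map $g$, i.e.\ the matrix $(q-1)\prod_i(|\p_i|^2-1)|\p_i|^{r_i-1}\,\Lambda(\n)^{-1}$. Because $\deg\mathrm{div}(\Delta_{\mathfrak{a}})$ does not depend on $\mathfrak{a}$, the condition $\deg C_d=0$ forces $\sum_{\mathfrak{a}}r_{\mathfrak{a}}=0$. Since $\Upsilon(\n)^{-1}$ is the tensor product of the explicit local inverses $\Upsilon(\p_i^{r_i})^{-1}$, its column sums factor over the primes. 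The local column sums are $|\p|-1$ in columns $0$ and $r$, and $(|\p|-1)^2|\p|^{m(j)-1}$ otherwise. This turns $\sum_{\mathfrak{a}} r_{\mathfrak{a}}=0$ into a linear equation whose solution is $\deg(P(\n)_d)$.

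\textbf{Comparison.} The paper obtains the degree as a by-product of Gekeler's vanishing orders of $\Delta$ at the cusps and the tensor structure of $\Lambda(\n)$, so it needs no analysis of $\Gamma_0(\n)\backslash\mathbb{P}^1(K)$. Your count is more elementary and self-contained, and it yields more. It parametrizes the individual cusps of height $d$ by $(A/e)^\ast/\mathbb{F}_q^\ast$. It also shows transparently that the single global factor $\frac{1}{q-1}$ comes from the freedom $\det\gamma\in\mathbb{F}_q^\ast$ in $\GL_2(A)$. In the classical $\SL_2(\mathbb{Z})$ setting this freedom is absent, which is why the classical count is $\varphi(e)$ rather than $\varphi(e)/2$.

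\textbf{The slip in your ``only if'' direction.} Write $\gamma(a,d)^{T}=\lambda(a',d)^{T}$ with $\lambda\in\mathbb{F}_q^\ast$ and $\mu=\det\gamma$.
\begin{itemize}
\item The second coordinate gives $(\n/d)\gamma' a+\delta=\lambda$. Hence $\delta\equiv\lambda\pmod{\n/d}$, not modulo $d$; modulo $d$ you learn nothing about $\delta$.
\item From $\alpha\delta\equiv\mu\pmod{\n}$ you then get $\alpha\equiv\mu\lambda^{-1}\pmod{\n/d}$.
\item Since $e$ divides both $d$ and $\n/d$, reducing the first coordinate $\alpha a+\beta d=\lambda a'$ modulo $e$ gives $a'\equiv\mu\lambda^{-2}a\pmod{e}$.
\end{itemize}
So the correct $\varepsilon$ is $\mu\lambda^{-2}$, not the scalar $\lambda$ itself. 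With this fix your Step~1 goes through, since $\mu$ ranges over all of $\mathbb{F}_q^\ast$ via the matrices $\mathrm{diag}(\varepsilon,1)\in\Gamma_0(\n)$. The count $\varphi(e)/(q-1)$, or $1$ when $e=1$, then follows exactly as you describe.
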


\begin{rem}
A cusp of $X_0(\n)$ of height $d$ is rational iff $\deg(P(\n)_d) = 1$. Thus, the above lemma recovers \cite[Lemma 3.1 (iii)]{papikian_eisenstein_2016}.
\end{rem}

Let $\CC(\n)$ be the rational cuspidal divisor class group of $X_0(\n)$ in \cite{ho_rational_2024}, which is the quotient of
\begin{align*}
&\Div_{\text{cusp}}^0(X_0(\n))(K)\\
&:=\text{the group of the degree $0$ rational cuspidal divisors on $X_0(\n)$}\\
&:= \left\{ C := \sum_{\substack{d|\n\\d\in A_+}}a_d\cdot (P(\n)_d)\middle| \deg(C) := \sum_{\substack{d|\n\\d\in A_+}}a_d\cdot \deg(P(\n)_d) =0, a_d\in \mathbb{Z}\right\}
\end{align*}
by its subgroup of divisors of modular units on $X_0(\n)$. Assuming Conjecture \ref{Kevin's conjecture} together with Theorem \ref{Main Theorem}, one obtains a complete description of all modular units on $X_0(\n)$. This yields new insights into the determination of the structure of $\CC(\n)$ for general $\n$. The group $\CC(\n)$ provides a lower bound for the rational torsion subgroup $\TT(\n)$ of the Jacobian $J_0(\n)$ of $X_0(\n)$. Once the structure of $\CC(\n)$ is known, it makes further progress toward the generalized Ogg conjecture, which asserts that $\CC(\n) = \TT(\n)$; cf. \cite{ArmanaHoPapikian2025}, \cite{Pal2005}, \cite{papikian_eisenstein_2015}, \cite{papikian_rational_2017}, and \cite{ho_torsion_2024}.

\begin{thm} [cf. {\cite[Proposition 4.5]{papikian_rational_2017}} and {\cite[Lemma 3.6]{ho_rational_2024}}] \label{thm: exponent of C(n)}
Let $\n = \prod_{i=1}^s \p_i^{r_i} \in A_+$ with $s\geq 1$. Then the exponent of $\CC(\n)$ divides $$\prod_{i=1}^s (|\p_i|^2-1)|\p_i|^{r_i-1}.$$
\end{thm}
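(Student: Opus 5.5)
The abstract already indicates the route: I will construct an injective homomorphism
$$g:\Div_{\cusp}^0(X_0(\n))(K)\longrightarrow \mathcal{O}^\ast_{\n}\tens_{\mathbb{Z}}\mathbb{Q},$$
where $\mathcal{O}^\ast_\n$ denotes the group of Drinfeld modular units on $X_0(\n)$ modulo constants. For each $C$, the image $g(C)$ should be a formal rational power of a Delta quotient $g_{\underline{r}}$ whose divisor is $C$. If $N:=\prod_{i}(|\p_i|^2-1)|\p_i|^{r_i-1}$, it then suffices to show that for every degree-zero rational cuspidal divisor $C$ there is a family $\underline{r}$ satisfying:
\begin{itemize}
\item $\divisor(g_{\underline{r}})=(q-1)(q^2-1)\,N\,C$;
\item $\underline{r}$ satisfies conditions (1)--(4) of Theorem \ref{Main Theorem}.
\end{itemize}
Theorem \ref{Main Theorem} then gives a modular unit $u$ with $u^{(q-1)(q^2-1)}=g_{\underline{r}}$ up to constants. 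Hence $\divisor(u)=N C$ and $N C=0$ in $\CC(\n)$.

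\emph{Step 1: the order matrix.} I will compute $\ord_{[d]}\Delta_{\mathfrak{a}}$ at a cusp of height $d$. This is the standard computation used in \cite{gekeler_1997} and \cite[Lemma 2.2]{ho_rational_2024}. For prime-power level $\p^r$ it gives a quantity of the shape
$$\frac{|\gcd(d,\mathfrak{a})|^2\,|\n|}{|\mathfrak{a}|\,|d|\,|\gcd(d,\n/d)|}$$
times the ramification factor. The key structural point is that the resulting matrix $\Lambda(\n)=\big(\ord_{(P(\n)_d)}\Delta_{\mathfrak{a}}\big)_{d,\mathfrak{a}}$ factors as a Kronecker tensor product $\bigotimes_i\Lambda(\p_i^{r_i})$, up to diagonal scalings coming from the degrees in Proposition \ref{lem: degree of the rational cuspidal divisor}, which I take as proved. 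So I will first invert the local matrix $\Lambda(\p^r)$. This is the tridiagonal inverse from the prime-power case in \cite{ho_rational_2024}, and its denominators are $(|\p|^2-1)|\p|^{r-1}$, up to factors of $q-1$ absorbed by the normalisation. Tensoring the local inverses gives $\Lambda(\n)^{-1}$ with denominators dividing $N$. This also shows that $g$ is well-defined and injective, since $\Lambda(\n)$ is invertible.

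\emph{Step 2: the exponents.} For $C$ of degree zero I will set $\underline{r}=(q-1)(q^2-1)N\,\Lambda(\n)^{-1}C$, which is integral by Step 1.

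\emph{Step 3: the conditions.} Conditions (1) and (2) express the orders at $[\infty]$ and $[0]$ up to a factor of $(q-1)(q^2-1)$. They hold because these orders are $(q-1)(q^2-1)N$ times integer coefficients of $C$ at rational cusps. Condition (3) should follow from $\deg C=0$: the column sums of $\Lambda$ weighted by degrees equal the constant $\deg\divisor\Delta_{\mathfrak{a}}$, which is independent of $\mathfrak{a}$. Condition (4) I expect to be the main obstacle. I would try to show that each $r_{\mathfrak{a}}$ is divisible by $q-1$ because of the explicit factor $(q-1)$ in the scaling. Failing that, I would exploit the multiplicativity of $\underline{r}$ in each prime separately, so that $\prod\mathfrak{a}^{r_{\mathfrak{a}}}=\prod_i\p_i^{\sum_{\mathfrak{a}}v_{\p_i}(\mathfrak{a})r_{\mathfrak{a}}}$. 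Each exponent is then a weighted sum that carries the factor $(q^2-1)$, hence is divisible by $q-1$.
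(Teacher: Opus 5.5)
Your argument is correct, but it follows a different route from the paper's. The paper does not invoke Theorem~\ref{Main Theorem} here. It takes $\underline{r}=(q-1)N\Lambda(\n)^{-1}C$, so that $\divisor(g_{\underline{r}})=(q-1)NC$. It then uses only the fact that any Delta quotient with $\sum r_{\mathfrak{a}}=0$ has a $(q-1)^2$-th root $f\in\mathcal{O}(\Omega)^\ast$. This root transforms under $\Gamma_0(\n)$ by an $\mathbb{F}_q^\ast$-valued character, so $f^{q-1}$ is a modular unit with divisor $NC$.

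You instead inflate $\underline{r}$ by a further factor $q^2-1$ and apply the full Ligozat-type criterion. With that inflation the criterion is automatic. Since $(q-1)N\Lambda(\n)^{-1}=N\Upsilon(\n)^{-1}\cdot(q-1)\widetilde{\rho}(\n)^{-1}$ is already integral, every $r_{\mathfrak{a}}$ lies in $(q^2-1)\mathbb{Z}$. So condition (4), which you flagged as the main obstacle, is immediate. Conditions (1) and (2) follow either from your order computation at $[\infty]$ and $[0]$, where $\ord\Delta_{\mathfrak{a}}$ is $|\mathfrak{a}|$, resp.\ $|\n/\mathfrak{a}|$, or simply from $|\mathfrak{a}|\equiv 1 \bmod q-1$ together with $\sum r_{\mathfrak{a}}=0$. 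Your unit $u$ is, up to a constant, the paper's $f^{q-1}$.

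The paper's route is lighter. A $(q-1)^2$-th root needs only Gekeler's $D_{\mathfrak{a}}$ and their characters, whereas the main theorem rests on the finer $(q-1)(q^2-1)$-th roots coming from Lemma~\ref{The transformation law for D_pq}. Your route packages everything through one criterion and makes the role of the rational cusps visible. For this uniform exponent bound, however, the main theorem adds nothing beyond the paper's observation.

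One correction to your Step 1: the denominators of $\Lambda(\n)^{-1}$ divide $(q-1)N$, not $N$, because of the diagonal factor $\widetilde{\rho}(\n)$. For example, for $\n=\p^2$ and $q\ge 4$ the middle diagonal entry is $(|\p|^2+1)/\bigl((q-1)|\p|(|\p|^2-1)\bigr)$. Your hedge about factors of $q-1$ covers this. It is also exactly why only divisibility of the $r_{\mathfrak{a}}$ by $q^2-1$, and not by $(q-1)(q^2-1)$, is guaranteed.
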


\begin{rem}
The structure of $\mathcal{C}(\n)$ has been completely determined in the prime level case $\n=\p$ by \cite{Pal2005}, and in the prime power level case $\n=\p^r$ by \cite{ho_rational_2024}. After completing this work, I was informed that Conjecture \ref{Kevin's conjecture} holds for square-free $\n$, and the structure of $\mathcal{C}(\n)$ is fully determined in this case; see [G.-T. Chen, Ph.D. thesis] and \cite{papikian_eisenstein_2015}.
\end{rem}

The paper is organized as follows. In Section \ref{Sec: The transformation law}, in order to prove Theorem \ref{Main Theorem}, we compute the transformation law under $\Gamma_0(\p\q)$ of a $(q-1)(q^2-1)$th root of $\frac{\Delta_\p}{\Delta_\q}$ in $\mathcal{O}(\Omega)^\ast$, where $\p$ and $\q$ are two distinct primes in $A_+$ of odd degree. Then we prove Theorem \ref{Main Theorem} in Section \ref{Sec: Proof of Theorem}. In Section \ref{The key map g}, we construct a bridge map
$g: \Div_{\text{cusp}}^0(X_0(\n))(K) \hookrightarrow \mathcal{O}(X_0(\n))^\ast \displaystyle\tens_{\mathbb{Z}} \mathbb{Q}$, where $\mathcal{O}(X_0(\n))^\ast$ is the group of modular units on $X_0(\n)$, extending a result from the prime power case in \cite[Section 2.1]{ho_rational_2024} to the general $\n\in A_+$ case. In Section \ref{Sec: Applications}, as applications of Theorem \ref{Main Theorem} and the map $g$, we prove Theorem \ref{Evidence for Kevin's Conjecture}, Lemma \ref{lem: degree of the rational cuspidal divisor}, and Theorem \ref{thm: exponent of C(n)}.

\section{The transformation law for $D_{\p\q}(\gamma' z)$} \label{Sec: The transformation law}
Fix $\n \in A_+$ with $\delta:=\deg(\n)$. Recall that $\Delta(z)$ is the Drinfeld discriminant function, and denote $\Delta_{\n} (z) = \Delta(\n z)$. Let $D_{\n}$ be the function defined in \cite[p. 200]{gekeler_1997}. By \cite[Corollary 3.18]{gekeler_1997}, it is a maximal $k$-th root (up to constant multiple) of $\frac{\Delta}{\Delta_\n}$ in $\mathcal{O}(\Omega)^\ast$, where $$k = \begin{cases}
(q-1)(q^2-1),  & \text{if $\delta$ is even;} \\
(q-1)^2,  & \text{otherwise.}
\end{cases}$$
Let $\chi_{\n}: \Gamma_0(\n)\rightarrow \mathbb{F}_q^\ast$ be the character defined in \cite[Theorem 3.20]{gekeler_1997}.

\begin{lem} [{\cite[Corollary 3.21]{gekeler_1997}}] \label{original trans law of D}
The function $D_{\n}$ transforms under $\Gamma_0(\n)$ according to the character $$\omega_{\n}:=
\begin{cases}
\chi_{\n}\cdot \det^{\delta/2},  & \text{if $\delta$ is even;} \\
\chi_{\n}^2\cdot \det^{\delta},  & \text{otherwise.}
\end{cases}$$
\end{lem}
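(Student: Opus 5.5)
This lemma is quoted directly from Gekeler \cite[Corollary 3.21]{gekeler_1997}, so the plan is to reconstruct his argument rather than devise a new one. We start from \cite[Theorem 3.20]{gekeler_1997}, which describes how a $(q-1)$-th root of $\Delta/\Delta_\n$ transforms under $\Gamma_0(\n)$. Using the product expansion of $\Delta$, one fixes a $(q-1)$-th root $\Delta^{1/(q-1)}$; it is a modular form of weight $(q^2-1)/(q-1)=q+1$ and type $1$ for $\GL_2(A)$. Hence for $\gamma\in\GL_2(A)$ we have $\Delta^{1/(q-1)}(\gamma z)=(\det\gamma)^{-1}(cz+d)^{q+1}\Delta^{1/(q-1)}(z)$. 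Applying the same identity to $z\mapsto \n z$ and $\gamma'=\begin{pmatrix} a & \n b\\ c/\n & d\end{pmatrix}$ for $\gamma\in\Gamma_0(\n)$, the automorphy factors $(cz+d)^{q+1}$ and the determinant factors cancel in the quotient. So $(\Delta/\Delta_\n)^{1/(q-1)}$ is invariant under $\Gamma_0(\n)$ up to a constant, and Gekeler identifies this constant as $\chi_\n(\gamma)$.

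Next, $D_\n$ is defined on \cite[p. 200]{gekeler_1997} as a further root of $(\Delta/\Delta_\n)^{1/(q-1)}$. Its exponent is $q^2-1$ when $\delta$ is even and $q-1$ when $\delta$ is odd; this is the maximality statement of \cite[Corollary 3.18]{gekeler_1997}. Since $D_\n^{m}$ transforms by $\chi_\n$ (with $m$ the relevant exponent), the character $\omega_\n$ of $D_\n$ satisfies $\omega_\n^{m}=\chi_\n$. A root extraction alone does not determine $\omega_\n$, so the second step computes it directly. We work with the $t$-expansion of $D_\n$ at the cusp $\infty$: $D_\n$ has leading term a power $t^{e}$ with $e=(|\n|-1)/k$ times the normalisation. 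Then we evaluate the action of generators of $\Gamma_0(\n)$ on it.

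The generators split into three kinds. The unipotent matrices $\begin{pmatrix}1&b\\0&1\end{pmatrix}$ act trivially on $t$. The diagonal matrices $\begin{pmatrix}\alpha&0\\0&\beta\end{pmatrix}$ send $t\mapsto (\beta/\alpha)t$ up to normalisation, contributing a power of $\det$ combined with $\chi_\n$. The remaining generators are handled by the known value of $\chi_\n$ and the fact that $\Gamma_0(\n)$ modulo its commutator is controlled by these. Matching the diagonal contribution then produces exactly $\chi_\n\cdot\det^{\delta/2}$ in the even case and $\chi_\n^2\det^{\delta}$ in the odd case. The parity of $\delta$ enters through the exponent $|\n|-1\equiv q^\delta-1$ modulo $q^2-1$.

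The main obstacle is the last step: showing that the character is determined by its values on the diagonal and unipotent parts, and tracking the root-of-unity ambiguities when extracting roots. For the paper's purposes, we simply cite \cite[Corollary 3.21]{gekeler_1997} for this.
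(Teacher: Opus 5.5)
The paper gives no proof of this lemma: it is quoted verbatim from Gekeler's Corollary 3.21. Your closing fallback to that citation is therefore exactly what the paper does. The reconstruction you sketch, however, would not prove the statement, and two of its steps are wrong.

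\textbf{The root-extraction step.} Your own cancellation computation shows that $h(z)/h(\n z)$ is \emph{exactly} $\Gamma_0(\n)$-invariant. Any $(q-1)$-th root of $\Delta/\Delta_\n$ differs from it by a constant, so every such root is invariant. There is no constant $\chi_\n(\gamma)$ for Gekeler to identify, and $\chi_\n$ is genuinely nontrivial (e.g.\ for $\n$ prime and $q>2$; the paper uses that $(\det,\chi_{\p_1},\dots,\chi_{\p_s})$ maps $\Gamma_0(\n)$ onto $(\mathbb{F}_q^\ast)^{s+1}$).

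Hence $D_\n^{m}$, with $m=k/(q-1)$, is a constant multiple of $h/h_\n$, and the only information you get is $\omega_\n^{m}=1$. Your relation $\omega_\n^{m}=\chi_\n$ is actually incompatible with the statement being proved, since $\omega_\n$ is $\mathbb{F}_q^\ast$-valued and $(q-1)\mid m$. Moreover, for $\delta$ even, $\omega_\n^{q^2-1}=1$ does not even show that $\omega_\n$ lands in $\mathbb{F}_q^\ast$ rather than $\mu_{q^2-1}$, which is part of what the lemma asserts.

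\textbf{The $t$-expansion step.} The $t$-expansion at $\infty$ only computes $\omega_\n$ on the stabilizer $\Gamma_0(\n)_\infty$ of upper triangular matrices $\begin{pmatrix}\alpha&b\\0&\beta\end{pmatrix}$. There it is a correct consistency check. Write $D_\n=c\,t^{e}u(t^{q-1})$ with $u$ a power series in $t^{q-1}$ and $e=(q-1)(1-q^{\delta})/k$, so $e\equiv-\delta/2$ (resp.\ $-\delta$) modulo $q-1$. This gives $\omega_\n=(\alpha/\beta)^{\delta/2}$ (resp.\ $(\alpha/\beta)^{\delta}$) on $\Gamma_0(\n)_\infty$, matching the formula because $\chi_\n=\beta^{-\delta}$ on these matrices.

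But the claim that a character of $\Gamma_0(\n)$ is determined by its values on diagonal and unipotent elements is false, for two reasons.
\begin{itemize}
\item Every element of $\Gamma_0(\n)_\infty$ has finite order, while $\Gamma_0(\n)$ modulo the normal subgroup generated by its torsion is free of rank $g(X_0(\n))$. So once $g(X_0(\n))>0$ there are nontrivial characters into $\mu_k$ that vanish on $\Gamma_0(\n)_\infty$.
\item Even inside the expected family this fails. For $\n$ prime of even degree and $q$ odd, $\chi_\n\det^{\delta/2}$ and $\chi_\n^{1+(q-1)/\gcd(\delta,q-1)}\det^{\delta/2}$ are distinct characters with the same restriction to $\Gamma_0(\n)_\infty$.
\end{itemize}
Handling ``the remaining generators by the known value of $\chi_\n$'' is circular, since that $\omega_\n$ is built out of $\chi_\n$ is precisely what must be shown.

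What is missing is a global input proving that $\omega_\n\chi_\n^{-1}$ (resp.\ $\omega_\n\chi_\n^{-2}$) factors through $\det$. That is what the citation of Gekeler supplies. Only after it does your diagonal computation with $\begin{pmatrix}\alpha&0\\0&1\end{pmatrix}$ pin down the exponent $\delta/2$ (resp.\ $\delta$).
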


From the above, one obtains the following.
\begin{lem} [{\cite[Lemma 2.6]{ho_rational_2024}}] \label{extended trans law of D}
Fix $\m\in A_+$ and $\gamma\in \Gamma_0(\n\m)$. Then $$D_\n(\m \gamma z) = \omega_{\n}(\gamma)D_\n(\m z).$$
\end{lem}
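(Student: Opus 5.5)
The plan is to conjugate $\gamma$ by the diagonal matrix $\begin{pmatrix}\m & 0\\ 0 & 1\end{pmatrix}$, which converts the $\Gamma_0(\n\m)$ action on $\m z$ into a $\Gamma_0(\n)$ action on $z' := \m z$. Then Lemma \ref{original trans law of D} applies directly.

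Write $\gamma = \begin{pmatrix} a & b\\ c & d\end{pmatrix}\in\Gamma_0(\n\m)$, so $\n\m \mid c$. The first step is the matrix identity
$$\begin{pmatrix}\m & 0\\ 0 & 1\end{pmatrix}\begin{pmatrix} a & b\\ c & d\end{pmatrix} = \begin{pmatrix} a & \m b\\ c/\m & d\end{pmatrix}\begin{pmatrix}\m & 0\\ 0 & 1\end{pmatrix}.$$
Hence, as linear fractional transformations,
$$\m\,\gamma z = \frac{a(\m z) + \m b}{(c/\m)(\m z) + d} = \gamma'(\m z), \qquad \gamma' := \begin{pmatrix} a & \m b\\ c/\m & d\end{pmatrix}.$$

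The second step checks that $\gamma'\in\Gamma_0(\n)$. Its entries lie in $A$ because $\m\mid c$, and $\det\gamma' = ad - bc = \det\gamma\in\mathbb{F}_q^\ast$, so $\gamma'\in\GL_2(A)$. Its lower-left entry $c/\m$ is divisible by $\n$ because $\n\m\mid c$. Applying Lemma \ref{original trans law of D} at the point $\m z\in\Omega$ then gives
$$D_\n(\m\gamma z) = D_\n(\gamma'(\m z)) = \omega_\n(\gamma')\,D_\n(\m z).$$
Note that $\m z \in \Omega$, since $\m\in K^\ast$ preserves $\Omega$.

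The third step, which is the only real obstacle, is to show $\omega_\n(\gamma') = \omega_\n(\gamma)$; here $\gamma \in \Gamma_0(\n\m)\subseteq\Gamma_0(\n)$, so $\omega_\n(\gamma)$ makes sense. The $\det$ factors agree because $\det\gamma' = \det\gamma$. For $\chi_\n$ I would use its explicit description in \cite[Theorem 3.20]{gekeler_1997}. To my recollection, $\chi_\n\begin{pmatrix} a&b\\ c&d\end{pmatrix}$ depends only on $d$ modulo $\n$, and perhaps on $\det$: it is a norm-type character of $d \bmod \n$, something like $N_{(A/\n)/\mathbb{F}_q}(d)$ up to a power of $\det$. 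The matrices $\gamma$ and $\gamma'$ have the same diagonal entries and the same determinant, so if this description is right then $\chi_\n(\gamma') = \chi_\n(\gamma)$.

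If the formula turns out instead to involve $a$ as well, the conclusion still holds, since $a$ is also unchanged. What would break the argument is an expression in which $b$ or $c$ enters nontrivially. I expect this cannot happen, because $\chi_\n$ factors through $\Gamma_0(\n)\to(A/\n)^\ast\times(A/\n)^\ast$, $\gamma\mapsto(a,d)\bmod\n$, and the upper-triangular-mod-$\n$ structure kills $c$. Checking this factorization against Gekeler's definition is the one point needing care. Combining the three steps gives $D_\n(\m\gamma z) = \omega_\n(\gamma)D_\n(\m z)$.
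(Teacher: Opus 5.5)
Your proof is correct and is the standard argument behind the cited \cite[Lemma 2.6]{ho_rational_2024}; the paper gives no separate proof, only ``from the above''. You conjugate by $\mathrm{diag}(\m,1)$ to write $\m\gamma z=\gamma'(\m z)$ with $\gamma'\in\Gamma_0(\n)$ having the same diagonal entries and determinant as $\gamma$, and then apply Lemma \ref{original trans law of D}. The step you flagged is settled by Gekeler's explicit formula: $\chi_\n$ is a norm (power-residue) character of the lower-right entry modulo $\n$, exactly as the paper uses in the proof of Lemma \ref{The transformation law for D_pq}, where $\chi_\p(\gamma)=\mathrm{Nr}_\p(v)^{-1}$. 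It is not settled by your heuristic that ``the upper-triangular structure kills $c$'', since $\Gamma_0(\n)$ can have characters that do not factor through $(a,d)\bmod \n$.
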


Let $\p$ and $\q$ be distinct primes in $A_+$ of odd degree. By \cite[Remark 6.4]{papikian_eisenstein_2015}, fix $a, b\in A$ with $a\p - b\q = 1$, then we have
$$\frac{\Delta_\p}{\Delta_\q} = \text{const.~} f^{(q-1)(q^2-1)},$$ where $f(z) := D_{\p\q}(\gamma' z)\in \mathcal{O}(\Omega)^\ast$ and 
$$\gamma' := \begin{pmatrix}
\p & 1\\ b\p\q & a\p
\end{pmatrix}\in \GL_2(K).$$ In order to prove Theorem \ref{Main Theorem}, we need the following Lemma.

\begin{lem} \label{The transformation law for D_pq}
Let $f(z) := D_{\p\q}(\gamma' z)$ be as above. For $\gamma \in \Gamma_0(\p\q)$, we have
$$f(\gamma z) = \frac{\chi_\q(\gamma)}{\chi_{\p}(\gamma)}\cdot(\det(\gamma))^{\frac{\deg(\q)-\deg(\p)}{2}}\cdot f(z).$$
\end{lem}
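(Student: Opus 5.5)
The plan is to conjugate $\gamma$ by $\gamma'$ and apply Lemma \ref{original trans law of D} to $D_{\p\q}$. For $\gamma=\begin{pmatrix} \alpha & \beta\\ \p\q c & \delta_0\end{pmatrix}\in\Gamma_0(\p\q)$, write $f(\gamma z)=D_{\p\q}(\gamma'\gamma z)=D_{\p\q}\big((\gamma'\gamma\gamma'^{-1})\gamma' z\big)$. First I check that $\gamma'':=\gamma'\gamma\gamma'^{-1}$ lies in $\Gamma_0(\p\q)$. Since $\det\gamma'=a\p^2-b\p^2\q=\p(a\p-b\q)=\p$, we have
\[
\gamma'^{-1}=\p^{-1}\begin{pmatrix} a\p & -1\\ -b\p\q & \p\end{pmatrix}.
\]
A direct multiplication shows that the entries of $\gamma''$ lie in $A$; the factor $\p^{-1}$ is cancelled by the entries divisible by $\p$. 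It also shows that the lower-left entry is divisible by $\p\q$, and that $\det\gamma''=\det\gamma$. So $\gamma''\in\Gamma_0(\p\q)$, and Lemma \ref{original trans law of D} gives $f(\gamma z)=\omega_{\p\q}(\gamma'')f(z)$. Since $\deg(\p\q)$ is even, $\omega_{\p\q}(\gamma'')=\chi_{\p\q}(\gamma'')\det(\gamma)^{(\deg\p+\deg\q)/2}$.

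Next I recall the explicit form of $\chi_{\p\q}$ from \cite[Theorem 3.20]{gekeler_1997}. It is multiplicative in the level, $\chi_{\p\q}=\chi_\p\chi_\q$, and $\chi_\p$ depends only on the reduction of the lower-right entry $\delta_0$ modulo $\p$ (a norm or Legendre-type character $(\delta_0\bmod\p)\mapsto N(\delta_0)^{\pm1}$ into $\mathbb F_q^\ast$). I then compute the diagonal entries of $\gamma''$ modulo $\p$ and modulo $\q$, using $a\p\equiv1\pmod\q$ and $b\q\equiv-1\pmod\p$.

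Modulo $\q$ I expect $\gamma''$ to look like $\gamma$, giving $\chi_\q(\gamma'')=\chi_\q(\gamma)$. Modulo $\p$, conjugation by $\gamma'$ acts at $\p$ like an Atkin--Lehner involution, which swaps the diagonal entries. So the lower-right entry of $\gamma''$ should be congruent to $\alpha\equiv\det(\gamma)\delta_0^{-1}\pmod\p$. This yields
\[
\chi_\p(\gamma'')=\chi_\p(\gamma)^{-1}\cdot\big(\text{norm of }\det\gamma\text{ from }\mathbb F_\p\text{ to }\mathbb F_q\big)^{\pm1}=\chi_\p(\gamma)^{-1}\det(\gamma)^{\pm\deg\p},
\]
since $\det\gamma\in\mathbb F_q^\ast$ and its norm is $\det(\gamma)^{\deg\p}$.

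Combining these, $\omega_{\p\q}(\gamma'')=\chi_\q(\gamma)\chi_\p(\gamma)^{-1}\det(\gamma)^{(\deg\p+\deg\q)/2\pm\deg\p}$. With the correct sign the exponent becomes $(\deg\q-\deg\p)/2$, which is the claimed formula.

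The main obstacle is this bookkeeping of sign and normalization: the exact definition of $\chi_\p$ in Gekeler (whether it uses $\delta_0$ or $\alpha$, and whether it takes the norm or the inverse norm) determines whether the $\det^{\deg\p}$ correction cancels correctly. I will fix the convention from \cite[Theorem 3.20]{gekeler_1997} and check it against the special case $\gamma=\mathrm{diag}(1,u)$ with $u\in\mathbb F_q^\ast$, where both sides can be computed directly.
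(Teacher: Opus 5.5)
Your proposal is correct and is essentially the paper's own proof. The paper also writes $\gamma'\gamma=\gamma''\gamma'$ with $\gamma''\in\Gamma_0(\p\q)$ given explicitly and $\det\gamma''=\det\gamma$. It shows that the lower-right entry of $\gamma''$ is $\equiv\alpha$ mod $\p$ and $\equiv\delta_0$ mod $\q$, and then applies Lemma~\ref{original trans law of D}.

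The sign you leave open is the minus sign. It is fixed by the convention $\chi_\p(\gamma)=\mathrm{Nr}_\p(\delta_0)^{-1}$ used there: since $\alpha\delta_0\equiv\det\gamma$ mod $\p$, this gives $\chi_\p(\gamma'')=\mathrm{Nr}_\p(\alpha)^{-1}=\chi_\p(\gamma)^{-1}\det(\gamma)^{-\deg\p}$, and hence the exponent $(\deg\q-\deg\p)/2$. Also note a harmless typo in your display: $\det\gamma'=a\p^2-b\p\q$, not $a\p^2-b\p^2\q$.
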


\begin{proof}
Let $\gamma = \begin{pmatrix}
s & t \\ u\p\q & v
\end{pmatrix}\in \Gamma_0(\p\q)$. Observe that $\gamma' \gamma = \gamma'' \gamma'$, where $$\gamma'' := \begin{pmatrix}
a\p(u\q + s)-b\q(t\p+v) & t \p - u\q + v - s\\
\p\q (a^2u\p -b^2t\q + ab(s-v)) & av\p -bs\q - \p\q(au-bt)
\end{pmatrix}\in \Gamma_0(\p\q).$$
Let $r := av\p -bs\q - \p\q(au-bt)$. Then
\begin{enumerate}
    \item $r \equiv -bs\q \equiv (a\p-b\q)s = s \mod \p$. So, $\chi_{\p}(\gamma'') = \text{Nr}_\p(s)^{-1}$, where $\text{Nr}_\p: (A/\p)^\ast \rightarrow \mathbb{F}_q^\ast$ is the norm map.
    \item $r \equiv av\p \equiv (a\p-b\q)v = v \mod \q$. So, $\chi_{\q}(\gamma'') = \chi_\q(\gamma)$.
\end{enumerate}
Moreover, we have $\det(\gamma'') = \det(\gamma)$. Thus,
\begin{align*}
& D_{\p\q}(\gamma' \gamma z) = D_{\p\q}(\gamma'' \gamma' z)\\
& = \chi_{\p\q}(\gamma'')\cdot(\det(\gamma''))^{\frac{\deg(\p\q)}{2}}\cdot D_{\p\q}(\gamma' z)\\
& = \text{Nr}_\p(s)^{-1}\cdot\chi_\q(\gamma)\cdot(\det(\gamma))^{\frac{\deg(\p\q)}{2}}\cdot D_{\p\q}(\gamma' z)\\
& = \frac{\chi_\q(\gamma)}{\chi_{\p}(\gamma)}\cdot(\det(\gamma))^{\frac{\deg(\q)-\deg(\p)}{2}}\cdot D_{\p\q}(\gamma' z)
\end{align*}
since we have
$$\text{Nr}_{\p}(s) = \text{Nr}_{\p}(v)^{-1}\cdot\text{Nr}_{\p}(sv) = \chi_{\p}(\gamma)\cdot\det(\gamma)^{\deg(\p)}.$$
\end{proof}

\section{Proof of Theorem \ref{Main Theorem}} \label{Sec: Proof of  Theorem}
Fix $\n \in A_+$ with distinct prime divisors $\p_1, \cdots, \p_s$ of odd (even, resp.) degree for $i = 1,\cdots, t$ ($i = t+1,\cdots, s$, resp.). We assume that $\deg(\p_1)\leq \deg(\p_i)$ for all $2\leq i \leq t$. Let $$g_{\underline{r}}(z) := \prod_{\substack{\mathfrak{a}|\n\\\mathfrak{a}\in A_+}}\Delta_{\mathfrak{a}}^{r_{\mathfrak{a}}}(z),$$ where $\underline{r} = (r_{\mathfrak{a}})$ is a family of integers $r_{\mathfrak{a}}\in\mathbb{Z}$ indexed by monic $\mathfrak{a}\mid\n$.
\begin{defn}
A function $f: \Omega\rightarrow \mathbb{C}_\infty$ is called a Drinfeld modular form of weight $k\geq 0$ and type $\ell\in \mathbb{Z}/(q-1)\mathbb{Z}$ for $\Gamma_0(\n)$ if 
\begin{enumerate}
    \item $f(\gamma z) = (\det \gamma)^{-\ell}(cz+d)^k f(z)$ for $\gamma = \begin{pmatrix}
    a & b\\ c & d
    \end{pmatrix} \in \Gamma_0(\n)$ and $z\in \Omega$;
    \item $f$ is holomorphic (in the rigid analytic sense);
    \item $f$ is holomorphic at the cusps.
\end{enumerate}
\end{defn}
Recall that $\Delta_\mathfrak{a}(z)$ are Drinfeld modular forms on $\Omega$ of the same weight $q^2-1$ and type $0$ for $\Gamma_0(\n)$ for all monic $\mathfrak{a}\mid\n$ (cf. \cite[(1.2)]{gekeler_1997}). It follows that if $g_{\underline{r}}$ is $\Gamma_0(\n)$-invariant, then $\displaystyle\sum_{\substack{\mathfrak{a}|\n\\\mathfrak{a}\in A_+}} r_{\mathfrak{a}} = 0$.

Assume that $\displaystyle\sum_{\substack{\mathfrak{a}|\n\\\mathfrak{a}\in A_+}} r_{\mathfrak{a}} = 0$, and let $m:=\displaystyle\sum_{\substack{\text{monic~}\mathfrak{a}|\n\\\deg(\mathfrak{a})\text{~is~odd}}}r_{\mathfrak{a}}$. Then we have
\begin{align}
&g_{\underline{r}} = \left[\prod_{i=1}^t\prod_{\substack{\text{monic~}\mathfrak{a}|\n\\\deg(\mathfrak{a})\text{~is~odd}\\\p_i\mid \mathfrak{a}\\\p_j\nmid \mathfrak{a}~\forall j<i}}\left(\frac{\Delta}{\Delta_{\mathfrak{a}}}\right)^{-r_{\mathfrak{a}}}\right]\cdot\prod_{\substack{\text{monic~}\mathfrak{a}|\n\\\deg(\mathfrak{a})\text{~is~even}}}\left(\frac{\Delta}{\Delta_{\mathfrak{a}}}\right)^{-r_{\mathfrak{a}}}\\
&= \left(\frac{\Delta}{\Delta_{\p_1}}\right)^{-m}\cdot \left[\prod_{i=1}^t\prod_{\substack{\text{monic~}\mathfrak{a}|\n\\\deg(\mathfrak{a})\text{~is~odd}\\\p_i\mid \mathfrak{a}\\\p_j\nmid \mathfrak{a}~\forall j<i}}\left(\frac{\Delta_{\p_1}}{\Delta_{\p_i}}\frac{\Delta_{\p_i}}{\Delta_{\mathfrak{a}}}\right)^{-r_{\mathfrak{a}}}\right]\cdot\prod_{\substack{\text{monic~}\mathfrak{a}|\n\\\deg(\mathfrak{a})\text{~is~even}}}\left(\frac{\Delta}{\Delta_{\mathfrak{a}}}\right)^{-r_{\mathfrak{a}}}. \label{eq of g_r}
\end{align}

\begin{lem}
Assume that $\displaystyle\sum_{\substack{\mathfrak{a}|\n\\\mathfrak{a}\in A_+}} r_{\mathfrak{a}} = 0$. Then $g_{\underline{r}}$ has a $(q-1)(q^2-1)$th root $f$ (up to constant multiple) in $\mathcal{O}(\Omega)^\ast$ if and only if $q+1\mid m$.
\end{lem}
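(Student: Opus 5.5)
We will use the decomposition \eqref{eq of g_r}. In it, every factor except $(\Delta/\Delta_{\p_1})^{-m}$ already has a $(q-1)(q^2-1)$th root in $\mathcal{O}(\Omega)^\ast$, up to a constant. The lemma then reduces to deciding when $(\Delta/\Delta_{\p_1})^{m}$ has such a root. The factors are handled as follows.

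\emph{Even-degree factors.} If $\deg(\mathfrak{a})$ is even, then by \cite[Corollary 3.18]{gekeler_1997} the function $D_{\mathfrak{a}}$ is a $(q-1)(q^2-1)$th root of $\Delta/\Delta_{\mathfrak{a}}$, up to a constant.

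\emph{Odd-degree factors.} Let $\mathfrak{a}$ have odd degree, with $\p_i$ the smallest-indexed odd-degree prime dividing it. Then $\mathfrak{a}/\p_i$ has even degree. We have $\Delta_{\p_i}/\Delta_{\mathfrak{a}}=(\Delta/\Delta_{\mathfrak{a}/\p_i})(\p_i z)$, so $D_{\mathfrak{a}/\p_i}(\p_i z)$ is a $(q-1)(q^2-1)$th root. The factor $\Delta_{\p_1}/\Delta_{\p_i}$ involves two distinct primes of odd degree (when $i\neq1$). By \cite[Remark 6.4]{papikian_eisenstein_2015}, as recalled before Lemma \ref{The transformation law for D_pq}, it equals a constant times $D_{\p_1\p_i}(\gamma'z)^{(q-1)(q^2-1)}$.

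Thus $g_{\underline r}=c\,(\Delta/\Delta_{\p_1})^{-m}\,h^{(q-1)(q^2-1)}$ for some $h\in\mathcal{O}(\Omega)^\ast$ and some constant $c$.

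\emph{The case $q+1\mid m$.} Here $(\Delta/\Delta_{\p_1})^{-m}=\bigl(D_{\p_1}^{(q-1)^2}\bigr)^{-m}$ up to a constant, and $(q-1)^2m$ is divisible by $(q-1)^2(q+1)=(q-1)(q^2-1)$. So $D_{\p_1}^{-m/(q+1)}h$ is the required root.

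\emph{The converse.} Suppose $g_{\underline r}=c'f^{(q-1)(q^2-1)}$. Then $(\Delta/\Delta_{\p_1})^{m}=c''(h/f)^{(q-1)(q^2-1)}$, and also $\Delta/\Delta_{\p_1}=c_0D_{\p_1}^{(q-1)^2}$. Put $u:=(h/f)^{q+1}D_{\p_1}^{-m}$. Then $u^{(q-1)^2}$ is constant. Since $\Omega$ is connected, $u$ is itself constant, so $D_{\p_1}^{m}$ is a $(q+1)$th power in $\mathcal{O}(\Omega)^\ast$ up to a constant.

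Now let $d=\gcd(m,q+1)$. Choose integers $x,y$ with $d=xm+y(q+1)$. Then $D_{\p_1}^{d}=(D_{\p_1}^m)^x(D_{\p_1}^{y})^{q+1}$ is also a $(q+1)$th power up to a constant. If $d<q+1$, this gives a root of $D_{\p_1}$ of order $(q+1)/d>1$ up to a constant. That in turn gives a root of $\Delta/\Delta_{\p_1}$ of order $(q-1)^2(q+1)/d$, which is strictly larger than the $(q-1)^2$ allowed by the maximality in \cite[Corollary 3.18]{gekeler_1997} for $\deg(\p_1)$ odd. This is a contradiction, so $q+1\mid m$.

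The main obstacle is the bookkeeping with constants in this step. Roots in $\mathcal{O}(\Omega)^\ast$ are only defined up to constants, and $\mathbb{C}_\infty$ is algebraically closed, so constants can always be absorbed. The maximality statement must therefore be read as maximality ``up to constant multiple''. I would check that the gcd argument respects this convention, and double-check the degree parity bookkeeping in the odd-degree factor step.
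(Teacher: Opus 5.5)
Your proof is correct and follows the paper's argument. You use the same decomposition of $g_{\underline r}$ and the same explicit root $D_{\p_1}^{-m/(q+1)}h$ when $q+1\mid m$. For the converse you rely on the same maximality of the $(q-1)^2$th root of $\Delta/\Delta_{\p_1}$, and you spell it out more carefully than the paper's one-line appeal, via the connectedness-of-$\Omega$ and B\'ezout steps.

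The only thing to add is the trivial case $t=0$, where no odd-degree prime divides $\n$. There $\p_1$ does not exist, but $m=0$ and $\prod_{\mathfrak{a}} D_{\mathfrak{a}}^{-r_{\mathfrak{a}}}$ is directly the required root.
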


\begin{proof}
\begin{enumerate}
    \item If $t = 0$, then $m = 0$. The proof follows by taking $$f(z) := \prod_{\substack{\mathfrak{a}|\n\\\mathfrak{a}\in A_+}}D_{\mathfrak{a}}(z)^{-r_{\mathfrak{a}}}.$$
    \item If $t\geq 1$, the lemma is true since $\frac{\Delta}{\Delta_{\p_1}}$ has a maximal $(q-1)^2$-th root in $\mathcal{O}(\Omega)^\ast$ and all the remaining fractions in Equation (\ref{eq of g_r}) have a $(q-1)(q^2-1)$-th root in $\mathcal{O}(\Omega)^\ast$. When $q+1\mid m$, we take
    \begin{align*}
    f(z) := &D_{\p_1}(z)^{-\frac{m}{q+1}}\cdot\prod_{i=2}^t\prod_{\substack{\text{monic~}\mathfrak{a}|\n\\\deg(\mathfrak{a})\text{~is~odd}\\\p_i\mid \mathfrak{a}\\\p_j\nmid \mathfrak{a}~\forall j<i}}\left(D_{\p_1\p_i}(z_i)\right)^{-r_{\mathfrak{a}}}\\
    &\times\left[\prod_{i=1}^t\prod_{\substack{\text{monic~}\mathfrak{a}|\n\\\deg(\mathfrak{a})\text{~is~odd}\\\p_i\mid \mathfrak{a}\\\p_j\nmid \mathfrak{a}~\forall j<i}}\left(D_{\mathfrak{a}/{\p_i}}(\p_i z)\right)^{-r_{\mathfrak{a}}}\right]\cdot\prod_{\substack{\text{monic~}\mathfrak{a}|\n\\\deg(\mathfrak{a})\text{~is~even}}}D_{\mathfrak{a}}(z)^{-r_{\mathfrak{a}}},
    \end{align*}
    where $z_i := \frac{\p_1 z +1}{b_i \p_1 \p_i z + a_i\p_1}$ and $a_i\p_1 - b_i\p_i = 1$ for some fixed $a_i, b_i\in A$ with $i = 2,\cdots, t$.
\end{enumerate}
\end{proof}

In the following, we compute the transformation law under $\Gamma_0(\n)$ for the function $f$ obtained in the above lemma. Suppose that $t = 0$ and $m = 0$, then by Lemma \ref{original trans law of D}, for $\gamma\in \Gamma_0(\n)$ we have $$f(\gamma z) = f(z)\cdot \prod_{\substack{\mathfrak{a}|\n\\ \mathfrak{a}\in A_+}}(\chi_{\mathfrak{a}}(\gamma)\cdot \det(\gamma)^{\frac{\deg(\mathfrak{a})}{2}})^{-r_{\mathfrak{a}}}.$$
On the other hand, suppose that $t \geq 1$ and $q+1\mid m$, then by Lemma \ref{extended trans law of D} and \ref{The transformation law for D_pq}, for $\gamma\in \Gamma_0(\n)$ we have
\begin{align*}
f(\gamma z) &= f(z)\cdot (\chi_{\p_1}^2(\gamma)\cdot \det(\gamma)^{\deg(\p_1)})^{-\frac{m}{q+1}}\\
    &\times\prod_{i=1}^t\prod_{\substack{\text{monic~}\mathfrak{a}|\n\\\deg(\mathfrak{a})\text{~is~odd}\\\p_i\mid \mathfrak{a}\\\p_j\nmid \mathfrak{a}~\forall j<i}}\left(\frac{\chi_{\p_i}(\gamma)}{\chi_{\p_1}(\gamma)}\cdot(\det(\gamma))^{\frac{\deg(\p_i)-\deg(\p_1)}{2}}\right)^{-r_{\mathfrak{a}}}\\
    &\times\prod_{i=1}^t\prod_{\substack{\text{monic~}\mathfrak{a}|\n\\\deg(\mathfrak{a})\text{~is~odd}\\\p_i\mid \mathfrak{a}\\\p_j\nmid \mathfrak{a}~\forall j<i}}(\chi_{\mathfrak{a}/\p_i}(\gamma)\cdot \det(\gamma)^{\frac{\deg(\mathfrak{a}/\p_i)}{2}})^{-r_{\mathfrak{a}}}\\
    &\times\prod_{\substack{\text{monic~}\mathfrak{a}|\n\\\deg(\mathfrak{a})\text{~is~even}}}(\chi_{\mathfrak{a}}(\gamma)\cdot \det(\gamma)^{\frac{\deg(\mathfrak{a})}{2}})^{-r_{\mathfrak{a}}}\\
    & = f(z)\cdot \det(\gamma)^{\frac{1}{2}\left[\deg(\p_1)\cdot\frac{m}{q+1}-\frac{\sum_{\text{monic~}\mathfrak{a}|\n}r_{\mathfrak{a}}\cdot\deg(\mathfrak{a})}{q-1}\right]\cdot(q-1)}\cdot \prod_{\substack{\mathfrak{a}|\n\\ \mathfrak{a}\in A_+}}(\chi_{\mathfrak{a}}(\gamma))^{-r_{\mathfrak{a}}}
    \end{align*}
    since $(\chi_{\p_1}(\gamma))^{-2\cdot \frac{m}{q+1}+m} = (\chi_{\p_1}(\gamma))^{\frac{m}{q+1}\cdot (q-1)} = 1$.
Note that $$\{(\det(\gamma),\chi_{\p_1}(\gamma),\cdots,\chi_{\p_s}(\gamma))\mid \gamma \in \Gamma_0(\n)\} = (\mathbb{F}_q^\ast)^{s+1}.$$
Thus, in both cases, $f$ is invariant under $\Gamma_0(\n)$ iff for every $\gamma\in  \Gamma_0(\n)$, we have
\begin{enumerate}
    \item $$\det(\gamma)^{\frac{1}{2}\left[\deg(\p_1)\cdot\frac{m}{q+1}-\frac{\sum_{\text{monic~}\mathfrak{a}|\n}r_{\mathfrak{a}}\cdot\deg(\mathfrak{a})}{q-1}\right]\cdot(q-1)}=1,$$ i.e., $q-1\mid \sum_{\text{monic~}\mathfrak{a}|\n}r_{\mathfrak{a}}\cdot\deg(\mathfrak{a})$, and $\frac{m}{q+1}\equiv\frac{\sum_{\text{monic~}\mathfrak{a}|\n}r_{\mathfrak{a}}\cdot\deg(\mathfrak{a})}{q-1}\mod 2$.
    \item $\displaystyle\prod_{\substack{\mathfrak{a}|\n\\ \mathfrak{a}\in A_+}}(\chi_{\mathfrak{a}}(\gamma))^{-r_{\mathfrak{a}}} = 1$.
\end{enumerate}
From the above, we obtain the following.
\begin{lem} \label{preliminary lemma of main theorem}
$g_{\underline{r}}$ has a $(q-1)(q^2-1)$th root as a modular function on $X_0(\n)$ iff the following conditions hold:
\begin{enumerate}
    \item $\displaystyle\sum_{\substack{\mathfrak{a}|\n\\\mathfrak{a}\in A_+}} r_{\mathfrak{a}} = 0$.
    \item $m := \displaystyle\sum_{\substack{\text{monic~}\mathfrak{a}|\n\\\deg(\mathfrak{a})\text{~is~odd}}}r_{\mathfrak{a}}\equiv 0 \mod q+1$.
    \item $\displaystyle\sum_{\substack{\mathfrak{a}|\n\\\mathfrak{a}\in A_+}}r_{\mathfrak{a}}\cdot\deg(\mathfrak{a})\equiv 0 \mod q-1$.
    \item $\frac{m}{q+1}\equiv\frac{\sum_{\text{monic~}\mathfrak{a}|\n}r_{\mathfrak{a}}\cdot\deg(\mathfrak{a})}{q-1}\mod 2$.
    \item $\displaystyle\prod_{\substack{\mathfrak{a}|\n\\ \mathfrak{a}\in A_+}}(\chi_{\mathfrak{a}}(\gamma))^{-r_{\mathfrak{a}}} = 1$.
\end{enumerate}
\end{lem}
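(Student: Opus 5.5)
The plan is to assemble Lemma \ref{preliminary lemma of main theorem} from the computations just carried out, splitting the statement into a ``root exists in $\mathcal{O}(\Omega)^\ast$'' part and a ``root is $\Gamma_0(\n)$-invariant'' part.

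For necessity, suppose some $(q-1)(q^2-1)$th root $F$ of $g_{\underline{r}}$ is a modular function on $X_0(\n)$. Then $g_{\underline{r}}=F^{(q-1)(q^2-1)}$ is itself $\Gamma_0(\n)$-invariant. Since all $\Delta_{\mathfrak{a}}$ have weight $q^2-1$ and type $0$, invariance forces $\sum r_{\mathfrak{a}}=0$; this is condition (1). Moreover $F\in\mathcal{O}(\Omega)^\ast$, because $g_{\underline{r}}$ has no zeros or poles on $\Omega$ and a meromorphic root of a unit is a unit. So the preceding lemma gives $q+1\mid m$, which is condition (2).

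Next I use that $f$ from that lemma is a root of $g_{\underline{r}}/c$ for some constant $c$. Any other root $F$ then has the form $F=\lambda\zeta f$, with $\zeta$ a locally constant, hence constant, $(q-1)(q^2-1)$th root of unity on the connected space $\Omega$, and $\lambda$ a constant. The two functions therefore differ by a constant, so $F$ is $\Gamma_0(\n)$-invariant if and only if $f$ is.

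This reduces the question to the invariance criterion for $f$ already derived: the $\det$-exponent condition and the $\chi$-product condition. Both must hold for every $\gamma$. Since the map $\gamma\mapsto(\det\gamma,\chi_{\p_1}(\gamma),\dots,\chi_{\p_s}(\gamma))$ is onto $(\mathbb{F}_q^\ast)^{s+1}$, the $\det$-part and the $\chi$-part can be tested independently.

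The $\det$-part asks that $\det(\gamma)^{N}=1$ for all $\gamma$, where
$$N=\tfrac{q-1}{2}\left[\deg(\p_1)\tfrac{m}{q+1}-\tfrac{S}{q-1}\right],\qquad S=\sum r_{\mathfrak{a}}\deg\mathfrak{a}.$$
This amounts to $q-1\mid N$, and I translate it into conditions (3) and (4). Here I need $\deg\p_1$ odd when $t\ge1$, so that multiplying by it does not change the parity. When $t=0$ we have $m=0$, and condition (4) says $S/(q-1)$ is even, which matches the exponent $\sum r_{\mathfrak{a}}\deg(\mathfrak{a})/2$ being divisible by $q-1$.

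Some care is needed in the main obstacle: making sense of ``$\tfrac12[\cdot](q-1)$'' when $q$ is odd versus even. For $q$ even the halving and the parity condition need separate treatment, since $\det\gamma\in\mathbb{F}_q^\ast$ has odd order and square roots are unique. I would handle this by rewriting $N$ as an integer combination before reducing modulo $q-1$, checking the cases $q$ odd and $q$ even separately. The $\chi$-part is then exactly condition (5).

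Sufficiency runs the same chain backwards. Conditions (1) and (2) produce $f$ via the preceding lemma, and conditions (3)--(5) give its invariance. Finally, $f$ is meromorphic at the cusps because it is a product of (compositions of) the $D$-functions, which have $t$-expansions. This last point I would cite from \cite{gekeler_1997} rather than reprove.
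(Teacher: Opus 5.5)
Your proposal is correct and follows essentially the paper's route: condition (1) is forced by the weight, then given (1) the preceding lemma supplies a root $f\in\mathcal{O}(\Omega)^\ast$ exactly when (2) holds, and invariance of $f$ is read off from its transformation law, with the $\det$- and $\chi$-parts separated by surjectivity onto $(\mathbb{F}_q^\ast)^{s+1}$. Your additions (any root differs from $f$ by a constant, and meromorphy at the cusps) make explicit what the paper leaves implicit; you also do not need a separate case for $q$ even, since the $\det$-exponent is an integer $E$ with $2E=(q-1)\deg(\mathfrak{p}_1)\tfrac{m}{q+1}-S$, and $q-1\mid E\iff 2(q-1)\mid 2E$ holds for every $q$.
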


\begin{proof}[Proof of Theorem \ref{Main Theorem}]
It suffices to show that the conditions in Theorem \ref{Main Theorem} are equivalent to those in Lemma \ref{preliminary lemma of main theorem}. Assume that $\displaystyle\sum_{\substack{\mathfrak{a}|\n\\\mathfrak{a}\in A_+}} r_{\mathfrak{a}} = 0$, and let $$m:=\displaystyle\sum_{\substack{\text{monic~}\mathfrak{a}|\n\\\deg(\mathfrak{a})\text{~is~odd}}}r_{\mathfrak{a}}.$$
\begin{enumerate}
    \item Note that 
    \begin{align*}
    &\frac{1}{q^2-1}\sum_{\substack{\mathfrak{a}|\n\\ \mathfrak{a}\in A_+}} r_{\mathfrak{a}}\cdot |\mathfrak{a}| = \frac{1}{q^2-1}\sum_{\substack{\mathfrak{a}|\n\\ \mathfrak{a}\in A_+}} r_{\mathfrak{a}}\cdot (|\mathfrak{a}|-1)\\
    &= \sum_{\substack{\text{monic~}\mathfrak{a}|\n\\\deg(\mathfrak{a})\text{~is~odd}}} r_{\mathfrak{a}}\cdot \left(\frac{|\mathfrak{a}|-|\p_1|}{q^2-1}+\frac{|\p_1|-1}{q^2-1}\right) + \sum_{\substack{\text{monic~}\mathfrak{a}|\n\\\deg(\mathfrak{a})\text{~is~even}}} r_{\mathfrak{a}}\cdot \frac{|\mathfrak{a}|-1}{q^2-1}\\
    &= \frac{|\p_1|-1}{q-1}\cdot \frac{m}{q+1} + \sum_{\substack{\text{monic~}\mathfrak{a}|\n\\\deg(\mathfrak{a})\text{~is~odd}}} r_{\mathfrak{a}}\cdot |\p_1|\cdot\frac{(|\mathfrak{a}|/|\p_1|)-1}{q^2-1} + \sum_{\substack{\text{monic~}\mathfrak{a}|\n\\\deg(\mathfrak{a})\text{~is~even}}} r_{\mathfrak{a}}\cdot \frac{|\mathfrak{a}|-1}{q^2-1}.
    \end{align*}
    
    So, $\sum_{\substack{\mathfrak{a}|\n\\ \mathfrak{a}\in A_+}} r_{\mathfrak{a}}\cdot |\mathfrak{a}| \equiv 0 \mod (q-1)(q^2-1)$
    iff $q+1\mid m$ and
    \begin{align*}
    0 &\equiv \frac{|\p_1|-1}{q-1}\cdot \frac{m}{q+1} + \sum_{\substack{\text{monic~}\mathfrak{a}|\n\\\deg(\mathfrak{a})\text{~is~odd}}} r_{\mathfrak{a}}\cdot |\p_1|\cdot\frac{|\mathfrak{a}|/|\p_1|-1}{q^2-1} + \sum_{\substack{\text{monic~}\mathfrak{a}|\n\\\deg(\mathfrak{a})\text{~is~even}}} r_{\mathfrak{a}}\cdot \frac{|\mathfrak{a}|-1}{q^2-1}\\
    &\equiv \frac{1}{2}\left(\deg(\p_1)\cdot \frac{2m}{q+1} + \sum_{\substack{\text{monic~}\mathfrak{a}|\n\\\deg(\mathfrak{a})\text{~is~odd}}} r_{\mathfrak{a}}\cdot (\deg(\mathfrak{a})-\deg(\p_1)) + \sum_{\substack{\text{monic~}\mathfrak{a}|\n\\\deg(\mathfrak{a})\text{~is~even}}} r_{\mathfrak{a}}\cdot \deg(\mathfrak{a})\right)\\
    &= \frac{1}{2}\left(-\deg(\p_1)(q-1)\cdot \frac{m}{q+1} + \sum_{\text{monic~}\mathfrak{a}|\n} r_{\mathfrak{a}}\cdot \deg(\mathfrak{a})\right)\\
    & = \frac{1}{2}\left(-\deg(\p_1)\cdot \frac{m}{q+1} + \frac{\sum_{\text{monic~}\mathfrak{a}|\n} r_{\mathfrak{a}}\cdot \deg(\mathfrak{a})}{q-1}\right)\cdot(q-1) \mod q-1,
    \end{align*}
    i.e., $q-1\mid \sum_{\text{monic~}\mathfrak{a}|\n}r_{\mathfrak{a}}\cdot\deg(\mathfrak{a})$, and 
    $\frac{m}{q+1} \equiv \frac{\sum_{\text{monic~}\mathfrak{a}|\n} r_{\mathfrak{a}}\cdot \deg(\mathfrak{a})}{q-1} \mod 2.$
    \item Let $m' := m$ if $\deg(\n)$ is even; $m' := -m$ otherwise. Then
    \begin{align*}
    &\frac{1}{q^2-1}\sum_{\text{monic~}\mathfrak{a}|\n} r_{\mathfrak{a}}\cdot |\n/\mathfrak{a}| = \frac{1}{q^2-1}\sum_{\text{monic~}\mathfrak{a}|\n} r_{\mathfrak{a}}\cdot (|\n/\mathfrak{a}|-1)\\
    &= \sum_{\substack{\text{monic~}\mathfrak{a}|\n\\\deg(\n/\mathfrak{a})\text{~is~odd}}} r_{\mathfrak{a}}\cdot \left(\frac{|\n/\mathfrak{a}|-|\p_1|}{q^2-1}+\frac{|\p_1|-1}{q^2-1}\right) + \sum_{\substack{\text{monic~}\mathfrak{a}|\n\\\deg(\n/\mathfrak{a})\text{~is~even}}} r_{\mathfrak{a}}\cdot \frac{|\n/\mathfrak{a}|-1}{q^2-1}\\
    &= \frac{|\p_1|-1}{q-1}\cdot \frac{m'}{q+1} + \sum_{\substack{\text{monic~}\mathfrak{a}|\n\\\deg(\n/\mathfrak{a})\text{~is~odd}}} r_{\mathfrak{a}}\cdot |\p_1|\cdot\frac{(|\n/\mathfrak{a}|/|\p_1|)-1}{q^2-1} + \sum_{\substack{\text{monic~}\mathfrak{a}|\n\\\deg(\n/\mathfrak{a})\text{~is~even}}} r_{\mathfrak{a}}\cdot \frac{|\n/\mathfrak{a}|-1}{q^2-1}.
    \end{align*}
    
    So, $\sum_{\substack{\mathfrak{a}|\n\\\mathfrak{a}\in A_+}} r_{\mathfrak{a}}\cdot |\n/\mathfrak{a}| \equiv 0 \mod (q-1)(q^2-1)$
    iff $q+1\mid m'$ and
    \begin{align*}
    0 &\equiv \frac{|\p_1|-1}{q-1}\cdot \frac{m'}{q+1} + \sum_{\substack{\text{monic~}\mathfrak{a}|\n\\\deg(\n/\mathfrak{a})\text{~is~odd}}} r_{\mathfrak{a}}\cdot |\p_1|\cdot\frac{(|\n/\mathfrak{a}|/|\p_1|)-1}{q^2-1} + \sum_{\substack{\text{monic~}\mathfrak{a}|\n\\\deg(\n/\mathfrak{a})\text{~is~even}}} r_{\mathfrak{a}}\cdot \frac{|\n/\mathfrak{a}|-1}{q^2-1}\\
    &\equiv \frac{1}{2}\left(\deg(\p_1)\cdot \frac{2m'}{q+1} + \sum_{\substack{\text{monic~}\mathfrak{a}|\n\\\deg(\n/\mathfrak{a})\text{~is~odd}}} r_{\mathfrak{a}}\cdot (\deg(\n/\mathfrak{a})-\deg(\p_1)) + \sum_{\substack{\text{monic~}\mathfrak{a}|\n\\\deg(\n/\mathfrak{a})\text{~is~even}}} r_{\mathfrak{a}}\cdot \deg(\n/\mathfrak{a})\right)\\
    &= \frac{1}{2}\left(-\deg(\p_1)(q-1)\cdot \frac{m'}{q+1} + \sum_{\text{monic~}\mathfrak{a}|\n} r_{\mathfrak{a}}\cdot \deg(\n/\mathfrak{a})\right)\\
    & = \frac{1}{2}\left(-\deg(\p_1)\cdot \frac{m'}{q+1} - \frac{\sum_{\text{monic~}\mathfrak{a}|\n} r_{\mathfrak{a}}\cdot \deg(\mathfrak{a})}{q-1}\right)\cdot(q-1)\mod q-1,
    \end{align*}
    i.e., $q-1\mid \sum_{\text{monic~}\mathfrak{a}|\n}r_{\mathfrak{a}}\cdot\deg(\mathfrak{a})$, and 
    $\frac{m'}{q+1} \equiv \frac{\sum_{\text{monic~}\mathfrak{a}|\n} r_{\mathfrak{a}}\cdot \deg(\mathfrak{a})}{q-1} \mod 2.$
    \item $\displaystyle\prod_{\substack{\mathfrak{a}|\n\\ \mathfrak{a}\in A_+}} {\mathfrak{a}}^{r_{\mathfrak{a}}} \in (K^\ast)^{q-1}$ iff $\displaystyle\prod_{\substack{\mathfrak{a}|\n\\ \mathfrak{a}\in A_+}}(\chi_{\mathfrak{a}}(\gamma))^{-r_{\mathfrak{a}}} =  1$ for every $\gamma\in \Gamma_0(\n)$.
\end{enumerate}
\end{proof}

\begin{rem}
From the proof of Theorem \ref{Main Theorem}, assume that $\displaystyle\sum_{\substack{\mathfrak{a}|\n\\\mathfrak{a}\in A_+}} r_{\mathfrak{a}} = 0$, then the following are equivalent:
\begin{enumerate}
    \item $\displaystyle\sum_{\substack{\mathfrak{a}|\n\\ \mathfrak{a}\in A_+}} r_{\mathfrak{a}}\cdot |\mathfrak{a}| \equiv 0 \mod (q-1)(q^2-1)$.
    \item $\displaystyle\sum_{\substack{\mathfrak{a}|\n\\\mathfrak{a}\in A_+}} r_{\mathfrak{a}}\cdot |\n/\mathfrak{a}| \equiv 0 \mod (q-1)(q^2-1)$.
\end{enumerate}
\end{rem}

\section{The bridge map $g$ between rational cuspidal divisors and modular units on $X_0(\n)$} \label{The key map g}
In this section, fix $\n = \prod_{i=1}^s \p_i^{r_i}\in A_+$, where $\p_i\in A_+$ are primes, $s\geq 1$, and $r_i\geq 1$. The zero orders of $\Delta_{\mathfrak{a}}(z) := \Delta(\mathfrak{a}z)$, for $\mathfrak{a}\mid \n$, at the cusps of $X_0(\n)$ are defined in \cite[p. 47]{gekeler_drinfeld_1986}, and the divisor of $\Delta_{\mathfrak{a}}$ on $X_0(\n)$ is defined by $$\mathrm{div}(\Delta_\mathfrak{a}) := \sum_{[c]:\text{~cusp of }X_0(\n)} \ord_{[c]}(\Delta_{\mathfrak{a}})\cdot [c].$$ Extending a result from the prime power case in \cite[Section 2.1]{ho_rational_2024}, we construct a bridge map $g$, which relates a rational cuspidal divisor $C\in \Div_{\text{cusp}}^0(X_0(\n))(K)$ with a Delta quotient in the form
$$g_{\underline{r}} := \prod_{\substack{\mathfrak{a}\mid \n\\\mathfrak{a}\in A_+}}\Delta_{\mathfrak{a}}^{r_{\mathfrak{a}}},$$ where $r_{\mathfrak{a}}\in \mathbb{Z}$, such that 
\begin{equation}\label{Bridge map divisor relation}
\divisor(g_{\underline{r}}) := \sum_{\substack{\mathfrak{a}|\n\\\mathfrak{a}\in A_+}}r_{\mathfrak{a}}\cdot \mathrm{div}(\Delta_\mathfrak{a}) = (q-1)\prod_{i=1}^s (|\p_i|^2-1)|\p_i|^{r_i-1}\cdot C.
\end{equation}
Since $\deg(\text{div}(\Delta)) = \deg(\mathrm{div}(\Delta_\mathfrak{a})) > 0$ on $X_0(\n)$ for all monic $\mathfrak{a}\mid \n$, we have
$$\text{$\deg(\mathrm{div}(g_{\underline{r}})) = \sum_{\substack{\mathfrak{a}|\n\\\mathfrak{a}\in A_+}}r_{\mathfrak{a}}\cdot \deg(\mathrm{div}(\Delta_\mathfrak{a})) = 0$ on $X_0(\n)$ iff $\displaystyle\sum_{\substack{\mathfrak{a}|\n\\\mathfrak{a}\in A_+}} r_{\mathfrak{a}} = 0$}.$$

We write $a\pdiv b$ in $A$ to mean that $a\mid b$ in $A$ and that $b/a$ is coprime to $a$. Let $\p^r\pdiv \n$, where $\p\in A_+$ is a prime and $r\geq 0$. Consider the degeneracy map $\alpha_{\p}(\n): X_0(\n\p)\rightarrow X_0(\n)$ defined by
$$\alpha_{\p}(\n): [x~\text{mod}~\Gamma_0(\n\p)]\in Y_0(\n\p)\mapsto [x~\text{mod}~\Gamma_0(\n)] \in Y_0(\n)$$
and for a cusp $\begin{bmatrix}
    a\\ d\p^f
\end{bmatrix}^{\n\p}\in \Gamma_0(\n\p)\backslash\mathbb{P}^1(K)$ of $X_0(\n\p)$ with $$d\mid \frac{\n}{\p^r}\text{~and~}\gcd(a, \n\p) = 1,$$
$$\alpha_{\p}(\n): \begin{bmatrix}
    a\\ d\p^f
\end{bmatrix}^{\n\p}\mapsto\begin{cases}
\begin{bmatrix}
    a\\ d\p^f
\end{bmatrix}^{\n}, & \text{if~} f\leq r;\\
\\ 
\begin{bmatrix}
    a\\ d\p^{r+1}
\end{bmatrix}^{\n} = \begin{bmatrix}
    \p a + d\\ d\p^r
\end{bmatrix}^{\n}, & \text{if~} f = r+1.
\end{cases}$$
Note that when $f = r+1$, by \cite[Lemma 3.1 (i)]{papikian_eisenstein_2016} we have
\begin{align*}
&\begin{bmatrix}
    a\\ d\p^{r+1}
\end{bmatrix}^{\n} \sim \begin{pmatrix}
    1&0\\d\n&1
\end{pmatrix}\begin{bmatrix}
    a\\ d\p^{r+1}
\end{bmatrix}^{\n} = \begin{bmatrix}
    a\\ d\p^{r}(a\cdot\frac{\n}{\p^r} + \p)
\end{bmatrix}^{\n}\\
&\sim \begin{bmatrix}
    a(a\cdot\frac{\n}{\p^r} + \p)\\ d\p^{r}
\end{bmatrix}^{\n} \sim \begin{bmatrix}
    \p a + d\\ d\p^{r}
\end{bmatrix}^{\n}\in \Gamma_0(\n)\backslash\mathbb{P}^1(K)
\end{align*}
since
\begin{enumerate}
    \item $\begin{pmatrix}
        1&0\\d\n&1
    \end{pmatrix}\in \Gamma_0(\n)$.
    \item $a\cdot\frac{\n}{\p^r} + \p$ is coprime to $\n$.
    \item $\gcd(a(a\cdot\frac{\n}{\p^r} + \p), \n) = \gcd(\p a + d, \n) = 1$ and $d\p^r\mid \n$.
    \item $a(a\cdot\frac{\n}{\p^r} + \p)\equiv \p a + d \mod \gcd(d\p^r, \frac{\n}{d\p^r}) = \gcd(d, \frac{\n}{d\p^r})$.
\end{enumerate}
For a divisor $\prod_{i=1}^s\p_i^{h_i}$ of $\n = \prod_{i=1}^s\p_i^{r_i}$, define $$\rho_{\n}\left(\prod_{i=1}^s\p_i^{h_i}\right) = \begin{cases}
1, & \text{if there exists~} i \text{~such that~} 0<h_i<r_i;\\
q-1, & \text{otherwise}.
\end{cases}$$
For $\m\mid \n$ in $A_+$, consider the map
$$\alpha\vert^{\n}_{\m} := \alpha_{\q_1}(\m)\circ \alpha_{\q_2}(\q_1\m)\circ \cdots \circ \alpha_{\q_k}(\q_1\q_2\cdots\q_{k-1}\m)$$
from $X_0(\n)$ to $X_0(\m)$, where $\n = \m\cdot\prod_{i=1}^k \q_i$ and $\q_i$'s are primes in $A_+$.
\begin{lem}
Let $\n = \prod_{1\leq i\leq s}\p_i^{r_i}$ with a divisor $\m = \prod_{1\leq i\leq s}\p_i^{r_i'}$ in $A_+$. The ramification index of the cusp $\begin{bmatrix}
    a\\ d
\end{bmatrix}^{\n}$ of height $d\mid \n$ over the cusp $(\alpha\vert^{\n}_{\m})\begin{bmatrix}
    a\\ d
\end{bmatrix}^{\n} = \begin{bmatrix}
    a'\\ d'
\end{bmatrix}^{\m}$ of height $d'\mid \m$ is 
$$\frac{\rho_{\m}(d')}{\rho_{\n}(d)}\prod_{1\leq i\leq s}|\p_i|^{k_i},$$
where $$k_i = \begin{cases}
    \max\{r_i, 2 h_i\} - \max\{r_i', 2 h_i\}, &\text{if~} h_i < r_i';\\
    r_i - \min\{2h_i, r_i\}, &\text{otherwise}.
\end{cases}$$
Here, $d := \prod_{1\leq i\leq s}\p_i^{h_i}\mid \n$ and $d' := \gcd(d, \m) = \prod_{1\leq i\leq s}\p_i^{h_i'}$.
\end{lem}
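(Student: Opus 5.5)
The plan is to compute the ramification index as a ratio of cusp widths, and then reduce to a local computation at each prime separately.

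For a cusp $c$ of $X_0(\n)$, let $\mathrm{Stab}_{\Gamma_0(\n)}(c)$ be its stabilizer, conjugated into the Borel subgroup of upper triangular matrices by some $\sigma\in\GL_2(A)$ with $\sigma\infty=c$. The ramification index of $c$ over its image $c'$ on $X_0(\m)$ is the group index
$$[\sigma^{-1}\mathrm{Stab}_{\Gamma_0(\m)}(c')\sigma:\sigma^{-1}\mathrm{Stab}_{\Gamma_0(\n)}(c)\sigma].$$
This index splits into two factors. One comes from the unipotent parts, which are additive groups of the shape $\{\begin{pmatrix}1&x\\0&1\end{pmatrix}: x\in \mathfrak{w}\}$ for an ideal $\mathfrak{w}$, the "width". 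The other comes from the diagonal parts modulo the center $Z(\mathbb{F}_q)$. Because $\alpha\vert^{\n}_{\m}$ is a composition of the maps $\alpha_{\p}$, and ramification indices are multiplicative, it is enough in principle to treat one step $\n\p\to\n$. I would rather compute directly for $\n\to\m$, though, since stabilizers can be computed in closed form.

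First I would compute the stabilizer. Take $c=\begin{bmatrix}a\\d\end{bmatrix}^{\n}$ and $\sigma=\begin{pmatrix}a&b\\d&e\end{pmatrix}\in\GL_2(A)$. Then $\sigma^{-1}\Gamma_0(\n)\sigma\cap B$ consists of $\begin{pmatrix}\alpha&x\\0&\delta\end{pmatrix}$ with $\alpha,\delta\in\mathbb{F}_q^\ast$. The membership condition is that the lower-left entry of $\sigma\begin{pmatrix}\alpha&x\\0&\delta\end{pmatrix}\sigma^{-1}$ is divisible by $\n$. Working this out, the condition reads
$$d^2x+(\alpha-\delta)\,de\equiv 0\pmod{\n}\quad\text{(up to units).}$$
Taking $\alpha=\delta$ gives the width ideal $\mathfrak{w}_\n(d)=\n/\gcd(\n,d^2)$. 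Its local exponent at $\p_i$ is $r_i-\min\{2h_i,r_i\}$.

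The diagonal part modulo center is nontrivial exactly when some $\alpha\neq\delta$ can be compensated, which requires $\n\mid d^2 x'$-type solvability together with $\n\mid de\cdot(\text{unit})$. Since $\gcd(e,d)$ is coprime in the relevant sense, this happens exactly when every $h_i\in\{0,r_i\}$. In that case the stabilizer modulo center has order $(q-1)$ times the unipotent part; otherwise it has order $1$ times it. This is precisely the factor $\rho_\n(d)$.

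Second, I would compare the two stabilizers using the same $\sigma$, which is allowed because the images of cusps under $\alpha$ are represented compatibly. The ramification index is then
$$\frac{\rho_\m(d')}{\rho_\n(d)}\cdot\big|\mathfrak{w}_\n(d)/\mathfrak{w}_\m(d)\big|.$$
Here the width in $\Gamma_0(\m)$ must be taken for the height of $c$ as seen in level $\m$. That height is $\gcd(d,\m)=d'$ only up to a change of representative, and the stabilizer is invariant under that change. The width for level $\m$ computed with $\sigma$ has local exponent $r_i'-\min\{2h_i,r_i'\}$ when computed with the original $h_i$.

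Local bookkeeping then gives the two cases of $k_i$. If $h_i\ge r_i'$, then $\min\{2h_i,r_i'\}=r_i'$, the level-$\m$ exponent is $0$, and $k_i=r_i-\min\{2h_i,r_i\}$. If $h_i<r_i'$, then
$$k_i=(r_i-\min\{2h_i,r_i\})-(r_i'-\min\{2h_i,r_i'\}),$$
and the identity $r-\min\{2h,r\}=\max\{r,2h\}-2h$ turns this into $\max\{r_i,2h_i\}-\max\{r_i',2h_i\}$.

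The main obstacle I expect is the identification of the diagonal contribution with $\rho$, together with the justification that the level-$\m$ stabilizer can be computed with the same $\sigma$ and the exponents $h_i$ rather than $h_i'$. In the case $h_i\ge r_i'$ the formula must reduce consistently, and this needs care when $h_i>r_i'$. I would handle both issues by checking the single-prime step $\n\p\to\n$ against the explicit map $\alpha_\p(\n)$ given above, including the case $f=r+1$, and then inducting.
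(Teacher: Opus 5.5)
Your argument is correct, but it takes a different route from the paper. The paper never computes stabilizers. It quotes Gekeler's formula $\frac{q-1}{\rho_{\n}(d)}\prod_i|\p_i|^{r_i-\min\{2h_i,r_i\}}$ for the order of $\Delta$ at a cusp of height $d$ on $X_0(\n)$, and the same formula at the image cusp on $X_0(\m)$. Because $\Delta$ is a form for $\GL_2(A)$, its order upstairs is the ramification index times its order downstairs. The lemma is therefore just the quotient of the two formulas, using $h_i'=\min\{h_i,r_i'\}$. You instead compute the index of cusp stabilizers directly: the width $\n/\gcd(\n,d^2)$ plus an $\mathbb{F}_q^\ast$ diagonal part. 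This is self-contained, explains where $\rho$ comes from, and for $\m=1$ re-derives the formula the paper imports. The cost is that you must justify that the ramification index equals the stabilizer index modulo the center (via the Carlitz-exponential local parameters at cusps), and you must carry out the local solvability analysis yourself.

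Your stated reason for the diagonal criterion (that it ``requires $\n\mid de\cdot(\text{unit})$'') is not right as written. Argue prime by prime on the congruence $d^2x\equiv(\alpha-\delta)de$:
\begin{itemize}
\item If $h_i=0$, then $d$ is invertible mod $\p_i$ and the congruence is solvable.
\item If $h_i$ is at least the level exponent, both sides vanish.
\item If $h_i$ lies strictly between $0$ and the level exponent, then $\gcd(d,e)=1$ gives $v_{\p_i}((\alpha-\delta)de)=h_i<2h_i\le v_{\p_i}(d^2x)$, so there is no solution.
\end{itemize}

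The obstacle you flag is not one. The map $\alpha\vert^{\n}_{\m}$ is induced by the identity on $\mathbb{P}^1(K)$, so the same point and the same $\sigma$ serve at both levels. Running the same local analysis at level $\m$ (with $v_{\p_i}(d)=h_i$) gives width exponent $r_i'-\min\{2h_i,r_i'\}$. It also gives a nontrivial diagonal part iff each $h_i=0$ or $h_i\ge r_i'$, i.e. iff each $h_i'\in\{0,r_i'\}$, which is exactly the condition $\rho_{\m}(d')=q-1$. So no induction over the single steps $\alpha_{\p}$ is needed. The index factors as (width index) times (diagonal index) because the level-$\n$ stabilizer meets the unipotent part of the level-$\m$ stabilizer exactly in the level-$\n$ width.
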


\begin{proof}
By \cite{gekeler_1997}, the order of $\Delta$ at $\begin{bmatrix}
    a\\ d
\end{bmatrix}^{\n}$ is
\begin{equation} \label{first order}
\frac{q-1}{\rho_\n(d)}\prod_{1\leq i\leq s}|\p_i|^{r_i - \min \{2h_i, r_i\}},
\end{equation}
and the order of $\Delta$ at $\begin{bmatrix}
    a'\\ d'
\end{bmatrix}^{\m}$ is
\begin{equation} \label{second order}
\frac{q-1}{\rho_\m(d')}\prod_{1\leq i\leq s}|\p_i|^{r_i' - \min \{2h_i', r_i'\}}.
\end{equation}
Note that $h_i' = \min \{h_i, r_i'\}$. Then the proof is completed by dividing Equation (\ref{first order}) with Equation (\ref{second order}).
\end{proof}

\begin{lem}
Let $\n = \prod_{1\leq i\leq s}\p_i^{r_i}$ with a divisor $\m = \prod_{1\leq i\leq s}\p_i^{r_i'}$ in $A_+$. The order of $\Delta_{\m}$ at the cusp $\begin{bmatrix}
    a\\ d
\end{bmatrix}^{\n}$ of $X_0(\n)$ is 
$$\frac{q-1}{\rho_{\n}(d)}\prod_{1\leq i\leq s}|\p_i|^{\max\{h_i, r_i-h_i\}-|r_i'-h_i|},$$
where $d := \prod_{1\leq i\leq s}\p_i^{h_i}\mid \n$ and $\gcd(a, \n) = 1$.
\end{lem}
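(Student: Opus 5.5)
We need the order of $\Delta_\m(z)=\Delta(\m z)$ at the cusp $\begin{bmatrix} a\\ d\end{bmatrix}^{\n}$. The plan is to transport the question to a cusp on which we already know the order of $\Delta$, namely via the formula (\ref{first order}) for $\ord\Delta$ at cusps of $X_0(\n)$ from \cite{gekeler_1997}, used in the proof of the preceding lemma.

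The key observation is that $z\mapsto \m z$ is given by the matrix $\begin{pmatrix}\m&0\\0&1\end{pmatrix}$. It carries the cusp $\begin{bmatrix} a\\ d\end{bmatrix}$ to $\begin{bmatrix} \m a\\ d\end{bmatrix}$. After dividing by $g:=\gcd(\m a,d)=\gcd(\m,d)$, this becomes $\begin{bmatrix} (\m/g)a\\ d/g\end{bmatrix}$. We regard $\Delta_\m$ as the pullback of $\Delta$ along this map from $\Gamma_0(\n)\backslash\Omega$ to a suitable modular curve, where $\Delta$ is viewed on a curve $X_0(\n')$ of the appropriate level. The natural choice is $\n'=\n$ itself, since $\begin{pmatrix}\m&0\\0&1\end{pmatrix}^{-1}\Gamma_0(\n\m)\begin{pmatrix}\m&0\\0&1\end{pmatrix}\subset\Gamma_0(\n/\m)$. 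Cleaner still is to work locally at each prime, because all the quantities factor over the $\p_i$.

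Concretely, I would compute $\ord_{[a/d]}(\Delta_\m)$ as (ramification index of $[a/d]$ in $X_0(\n)$ over its image in $X_0(1)$ through the map $z\mapsto\m z$) times (order of $\Delta$ at $\infty$ on $X_0(1)$, suitably normalized). Equivalently, I would take the expansion of $\Delta(\m\sigma z)$, where $\sigma\in\GL_2(A)$ sends $\infty$ to $a/d$. Writing
$$\begin{pmatrix}\m&0\\0&1\end{pmatrix}\sigma=\sigma_1\begin{pmatrix}\alpha&\beta\\0&\delta\end{pmatrix}$$
with $\sigma_1\in\GL_2(A)$, invariance gives $\Delta(\m\sigma z)=\Delta\!\left(\frac{\alpha z+\beta}{\delta}\right)$. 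Its order in the uniformizer $t$ at the cusp is then $(q-1)\,|\alpha/\delta|$ times the width of the cusp $[a/d]$ for $\Gamma_0(\n)$. A standard Hermite normal form computation gives $\alpha=\gcd(\m,d)$ and $\delta=\m/\gcd(\m,d)$ up to units.

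The width of $\begin{bmatrix} a\\ d\end{bmatrix}^{\n}$ is $|\n/\gcd(d^2,\n)|\cdot\frac{1}{\rho_\n(d)}$ up to the normalization already built into (\ref{first order}). This is consistent with the $\m=1$ case, where (\ref{first order}) reads $\frac{q-1}{\rho_\n(d)}\prod|\p_i|^{r_i-\min\{2h_i,r_i\}}$.

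Combining the two, the exponent at $\p_i$ becomes
$$r_i-\min\{2h_i,r_i\}+2\min\{r_i',h_i\}-r_i'.$$
The final step is the elementary check that this equals $\max\{h_i,r_i-h_i\}-|r_i'-h_i|$. Since $r_i-\min\{2h_i,r_i\}=\max\{r_i-2h_i,0\}$ and $2\min\{r_i',h_i\}-r_i'=h_i-|r_i'-h_i|$, the exponent is $\max\{r_i-h_i,h_i\}-|r_i'-h_i|$, as claimed. The formula should also be sanity-checked against $\m=1$.

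The main obstacle is the correct normalization of the local parameter at a cusp for $\Gamma_0(\n)$ in the Drinfeld setting, in particular the factor $\rho_\n(d)\in\{1,q-1\}$ coming from the diagonal matrices $\begin{pmatrix}\epsilon&0\\0&1\end{pmatrix}$ in the stabilizer. I would avoid recomputing this by deriving everything relative to Gekeler's formula (\ref{first order}) at the same cusp. Then only the ratio $\ord(\Delta_\m)/\ord(\Delta)=|\alpha/\delta|\cdot|\gcd(\m,d)|^{?}$ needs to be determined, and the $\rho_\n(d)$ factors cancel automatically.
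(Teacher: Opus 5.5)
Your argument is correct, but it is organized differently from the paper's. The paper never compares $\Delta_\m$ with $\Delta$ on $X_0(\n)$ itself. Instead it pushes the cusp down along the degeneracy map $\alpha\vert^{\n}_{\m}\colon X_0(\n)\to X_0(\m)$. It then quotes Gekeler for the order of $\Delta_\m$ at the image cusp of height $d'=\gcd(d,\m)$ on $X_0(\m)$; this is the $\Delta$-formula at height $\m/d'$, i.e.\ transported by the Atkin--Lehner involution. Finally it multiplies by the ramification index from the preceding lemma, which is itself the quotient of Gekeler's $\Delta$-order formulas on $X_0(\n)$ and $X_0(\m)$.

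You stay on $X_0(\n)$ and compute $\ord(\Delta_\m)/\ord(\Delta)$ at the same cusp from the factorization $\begin{pmatrix}\m&0\\0&1\end{pmatrix}\sigma=\sigma_1\begin{pmatrix}\alpha&\beta\\0&\delta\end{pmatrix}$. Here $\alpha=\gcd(\m,d)$ and $\delta=\m/\gcd(\m,d)$, because $\gcd(\m a,d)=\gcd(\m,d)$ when $\gcd(a,d)=1$. What your route buys:
\begin{itemize}
\item It needs only Gekeler's formula for $\ord\Delta$ on $X_0(\n)$.
\item The cancellation of $\rho_\n(d)$ and of the choice of local parameter is automatic.
\item It shows that the correction factor $|\gcd(\m,d)|^2/|\m|$ depends on neither $\n$ nor $a$.
\item The exponent bookkeeping reduces to the one-line identity you checked.
\end{itemize}
The paper's version is shorter given the degeneracy maps and ramification lemma it has just set up. In exchange, it leans on a second cited formula (for $\Delta_\m$ on $X_0(\m)$) and on the case split $h_i<r_i'$ versus $h_i\geq r_i'$.

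Two small repairs are needed.
\begin{itemize}
\item The factor $|\gcd(\m,d)|^{?}$ in your last sentence should simply be removed; the ratio is exactly $|\alpha/\delta|$. Indeed, $1/t\bigl(\tfrac{\alpha z+\beta}{\delta}\bigr)$ equals the Carlitz polynomial of $\alpha$ (of degree $|\alpha|$) evaluated at $1/t(z/\delta)$, plus a constant. So $t\bigl(\tfrac{\alpha z+\beta}{\delta}\bigr)$ vanishes to order $|\alpha|$ in $t(z/\delta)$, while $t(z)$ vanishes to order $|\delta|$. This is what your exponent computation actually uses.
\item The conjugation inclusion in your second paragraph is false as written: it produces the upper-right entry $b/\m$. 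What holds, for $\m\mid\n$, is $\begin{pmatrix}\m&0\\0&1\end{pmatrix}\Gamma_0(\n)\begin{pmatrix}\m&0\\0&1\end{pmatrix}^{-1}\subset\Gamma_0(\n/\m)$, which says that $\Delta_\m$ is pulled back from $X_0(\n/\m)$. Nothing in your computation depends on it.
\end{itemize}
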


\begin{proof}
By \cite{gekeler_1997}, the order of $\Delta_{\m}$ at the cusp $(\alpha\vert^{\n}_{\m})\begin{bmatrix}
    a\\ d
\end{bmatrix}^{\n} = \begin{bmatrix}
    a'\\ d'
\end{bmatrix}^{\m}$ of $X_0(\m)$ of height $d' := \gcd(d, \m) = \prod_{1\leq i\leq s}\p_i^{h_i'}\mid \m$ is $$\frac{q-1}{\rho_{\m}(d')}\prod_{1\leq i\leq s}|\p_i|^{r_i'-\min\{2(r_i'-h_i'), r_i'\}}.$$ The order of $\Delta_{\m}$ at $\begin{bmatrix}
    a\\ d
\end{bmatrix}^{\n}$ is equal to the product of its order at $\begin{bmatrix}
    a'\\ d'
\end{bmatrix}^{\m}$ and the ramification index of $\begin{bmatrix}
    a\\ d
\end{bmatrix}^{\n}$ over $\begin{bmatrix}
    a'\\ d'
\end{bmatrix}^{\m}$.
\end{proof}

Let $a_{\n}(d, \mathfrak{a})$ be the order of $\Delta_{\mathfrak{a}}$ at a cusp of height $d$ in $X_0(\n)$. We define a matrix $\Lambda(\n)^\intercal := (a_{\n}(d, \mathfrak{a}))_{\substack{\mathfrak{a}, d\mid \n\\\text{monic}}}$, indexed by the divisors of $\n$. Then we have the column vector
$$[\text{div}(\Delta_{\mathfrak{a}})]_{\substack{\mathfrak{a}\mid \n\\\mathfrak{a}\in A_+}} = \Lambda(\n)^\intercal\times [(P(\n)_d)]_{\substack{d\mid \n\\d\in A_+}}.$$
Consider a diagonal matrix $$\widetilde{\rho}(\n) := \begin{bmatrix}
    \frac{q-1}{\rho_{\n}(1)}\\ & \ddots\\ && \frac{q-1}{\rho_{\n}(d)}\\ &&& \ddots\\ &&&& \frac{q-1}{\rho_{\n}(\n)}
\end{bmatrix}_{\substack{d\mid \n\\d\in A_+}}\in \GL_{k}(\mathbb{Q}), $$ where $k := |\{\text{divisors $d$ of $\n$ in $A_+$}\}|$. Define $\Upsilon(\n) := \widetilde{\rho}(\n)^{-1}\times \Lambda(\n)$. For a prime power $\p^r\in A_+$, we have $$\Upsilon(\p^r)^\intercal =
\begin{bmatrix}
|\p|^r & |\p|^{r-2} & \cdots & 1 & 1\\
|\p|^{r-1} & |\p|^{r-1} & \ddots & \vdots & \vdots\\
|\p|^{r-2} & |\p|^{r-2} & \ddots & |\p|^{r-2} & |\p|^{r-2}\\
\vdots & \vdots & \ddots & |\p|^{r-1} & |\p|^{r-1}\\
1 & 1 & \cdots & |\p|^{r-2} & |\p|^r
\end{bmatrix}_{0\leq i, j\leq r}
$$
is a matrix with the $(i,j)$-entries defined by
$$|\p|^{\max\{j,r-j\}-|i-j|}.$$ By \cite{ho_rational_2024}, the transpose $\Upsilon(\p^r)$ of $\Upsilon(\p^r)^\intercal$ is invertible over $\mathbb{Q}$ with
\begin{align*}
&\Upsilon(\p^r)^{-1} = \frac{1}{(|\p|^{2}-1)|\p|^{r-1}}\times \\
&\begin{bmatrix}
|\p| & -|\p| &&&& \\
-1 & |\p|^2+1 &&&&& \\
& -|\p| & \ddots & -|\p|^{m(j)}&&& \\
&&& (|\p|^2+1)|\p|^{m(j)-1} &&& \\
&&& -|\p|^{m(j)} & \ddots & -|\p|& \\
&&&&& |\p|^2+1 & -1 \\
&&&&& -|\p| & |\p|
\end{bmatrix}_{0\leq i, j\leq r},
\end{align*}
where $m(j):=\min\{j,r-j\}$, and the $(i,j)$-entry of $\Lambda(\p^r)^{-1}$ is
$$
\frac{1}{(|\p|^{2}-1)|\p|^{r-1}}\times \begin{cases}
(|\p|^2+1)|\p|^{m(j)-1},  & \text{if $1\leq i=j\leq r-1$.} \\
-|\p|^{m(j)},  & \text{if $|i-j|=1$ and $j\neq 0, r$.} \\
|\p|,  & \text{if $(i,j)=(0,0)$ or $(r,r)$.} \\
-1,  & \text{if $(i,j)=(1,0)$ or $(r-1,r)$.} \\
0,  & \text{otherwise.}
\end{cases}
$$
For general $\n = \prod_{i=1}^s \p_i^{r_i}\in A_+$, we have
\begin{enumerate}
    \item $\Lambda(\n)^\intercal = \Upsilon(\n)^\intercal\times \widetilde{\rho}(\n)$, where $$\Upsilon(\n)^{\intercal} = \bigotimes_{i=1}^s \Upsilon(\p_i^{r_i})^{\intercal}.$$
    \item $\Lambda(\n) = \widetilde{\rho}(\n)\times \Upsilon(\n)$, where $$\Upsilon(\n) = \bigotimes_{i=1}^s \Upsilon(\p_i^{r_i}).$$
    \item $\Lambda(\n)$ is invertible over $\mathbb{Q}$, where $\Lambda(\n)^{-1} = \Upsilon(\n)^{-1}\times \widetilde{\rho}(\n)^{-1}$ and $$\Upsilon(\n)^{-1} = \bigotimes_{i=1}^s \Upsilon(\p_i^{r_i})^{-1}.$$
\end{enumerate}

\begin{defn} \label{Defn of the map g}
For general $\n = \prod_{i=1}^s \p_i^{r_i}\in A_+$, we define a bridge map
\begin{equation*}
\begin{tikzcd}
g: \Div_{\text{cusp}}^0(X_0(\n))(K) \arrow[r] &\mathcal{O}(X_0(\n))^\ast \displaystyle\tens_{\mathbb{Z}} \mathbb{Q}\\
C= \displaystyle\sum_{\substack{d\mid \n\\ d\in A_+}} a_d \cdot (P(\n)_d) \arrow[r,mapsto] & \displaystyle\prod_{\substack{\mathfrak{a}\mid \n\\ \mathfrak{a}\in A_+}}\Delta_\mathfrak{a}^{r_\mathfrak{a}}\otimes \frac{1}{(q-1)\prod_{i=1}^s (|\p_i|^2-1)|\p_i|^{r_i-1}},
\end{tikzcd}
\end{equation*}
where $$[r_{\mathfrak{a}}]_{\substack{\mathfrak{a}|\n\\ \mathfrak{a}\in A_+}} := (q-1)\prod_{i=1}^s (|\p_i|^2-1)|\p_i|^{r_i-1}\times\Lambda(\n)^{-1}\times [a_d]_{\substack{d|\n\\ d\in A_+}}$$ with $[r_{\mathfrak{a}}]_{\substack{\mathfrak{a}|\n\\ \mathfrak{a}\in A_+}}$ and $[a_d]_{\substack{d|\n\\ d\in A_+}}$ written as column vectors over $\mathbb{Z}$. From our construction, $g$ is injective and satisfies Equation \eqref{Bridge map divisor relation}.
\end{defn}

\begin{Ex}
Let $\n = \p\q$ with distinct primes $\p, \q\in A_+$. We have
$$
\Lambda(\n)^\intercal = \Lambda(\n) = \begin{bmatrix}
|\p| & 1\\
1 & |\p|
\end{bmatrix} \otimes \begin{bmatrix}
|\q| & 1\\
1 & |\q|
\end{bmatrix} = \begin{bmatrix}
|\p\q| & |\p| & |\q| & 1\\
|\p| & |\p\q| & 1 & |\q|\\
|\q| & 1 & |\p\q| & |\p|\\
1 & |\q| & |\p| & |\p\q|
\end{bmatrix}
$$
and 
\begin{align*}
\Lambda(\n)^{-1}
& = \frac{1}{(|\p|^2-1)(|\q|^2-1)}\times \left(\begin{bmatrix}
|\p| & -1\\
-1 & |\p|
\end{bmatrix} \otimes \begin{bmatrix}
|\q| & -1\\
-1 & |\q|
\end{bmatrix}\right)\\
&= \frac{1}{(|\p|^2-1)(|\q|^2-1)}\times\begin{bmatrix}
|\p\q| & -|\p| & -|\q| & 1\\
-|\p| & |\p\q| & 1 & -|\q|\\
-|\q| & 1 & |\p\q| & -|\p|\\
1 & -|\q| & -|\p| & |\p\q|
\end{bmatrix}.
\end{align*}
Note that the entries of the matrices $\Lambda(\n)^\intercal$ and $\Lambda(\n)^{-1}$ are indexed by the ordered set $\{1, \q, \p, \p\q\}^2$. It follows that
\begin{enumerate}
    \item $$g([0]-[\infty]) = (\Delta^{|\p\q|-1}\Delta_{\q}^{|\q|-|\p|}\Delta_{\p}^{|\p|-|\q|}\Delta_{\p\q}^{1-|\p\q|})\otimes \frac{1}{(|\p|^2-1)(|\q|^2-1)}.$$
    \item \begin{align*}
        g([\p]-[\infty]) &= (\Delta^{-|\q|-1}\Delta_{\q}^{|\q|+1}\Delta_{\p}^{|\p\q|+|\p|}\Delta_{\p\q}^{-|\p\q|-|\p|})\otimes \frac{1}{(|\p|^2-1)(|\q|^2-1)}\\
        &=(\Delta^{-1}\Delta_{\q}\Delta_{\p}^{|\p|}\Delta_{\p\q}^{-|\p|})\otimes \frac{1}{(|\p|^2-1)(|\q|-1)}.
    \end{align*}
    \item \begin{align*}
        g([\q]-[\infty]) &= (\Delta^{-|\p|-1}\Delta_{\q}^{|\p\q|+|\q|}\Delta_{\p}^{|\p|+1}\Delta_{\p\q}^{-|\p\q|-|\q|})\otimes \frac{1}{(|\p|^2-1)(|\q|^2-1)}\\
        &=(\Delta^{-1}\Delta_{\q}^{|\q|}\Delta_{\p}\Delta_{\p\q}^{-|\q|})\otimes \frac{1}{(|\p|-1)(|\q|^2-1)}.
    \end{align*}
\end{enumerate}
\end{Ex}

\begin{Ex}
Let $\n = \p\q^2$ with distinct primes $\p, \q\in A_+$. Then
\begin{align*}
\Lambda(\n)^\intercal
&= \left(\begin{bmatrix}
|\p| & 1\\
1 & |\p|
\end{bmatrix} \otimes \begin{bmatrix}
|\q|^2 & 1 & 1\\
|\q| & |\q| & |\q|\\
1 & 1 & |\q|^2
\end{bmatrix}\right)\times\begin{bmatrix}
1\\ & q-1\\ && 1\\ &&& 1\\ &&&& q-1\\ &&&&& 1
\end{bmatrix}\\
&= \begin{bmatrix}
|\p\q^2| & |\p| & |\p| & |\q^2| & 1 & 1\\
|\p\q| & |\p\q| & |\p\q| & |\q| & |\q| & |\q|\\
|\p| & |\p| & |\p\q^2| & 1 & 1 & |\q^2|\\
|\q^2| & 1 & 1 & |\p\q^2| & |\p| & |\p|\\
|\q| & |\q| & |\q| & |\p\q| & |\p\q| & |\p\q|\\
1 & 1 & |\q^2| & |\p| & |\p| & |\p\q^2|
\end{bmatrix}\begin{bmatrix}
1\\ & q-1\\ && 1\\ &&& 1\\ &&&& q-1\\ &&&&& 1
\end{bmatrix},
\end{align*}
\begin{align*}
\Lambda(\n)
&= \begin{bmatrix}
1\\ & q-1\\ && 1\\ &&& 1\\ &&&& q-1\\ &&&&& 1
\end{bmatrix}\begin{bmatrix}
|\p\q^2| & |\p\q| & |\p| & |\q^2| & |\q| & 1\\
|\p| & |\p\q| & |\p| & 1 & |\q| & 1\\
|\p| & |\p\q| & |\p\q^2| & 1 & |\q| & |\q^2|\\
|\q^2| & |\q| & 1 & |\p\q^2| & |\p\q| & |\p|\\
1 & |\q| & 1 & |\p| & |\p\q| & |\p|\\
1 & |\q| & |\q^2| & |\p| & |\p\q| & |\p\q^2|
\end{bmatrix},
\end{align*}
and 
\begin{align*}
&\Lambda(\n)^{-1} = \left(\frac{1}{|\p|^2-1}\begin{bmatrix}
|\p| & -1\\
-1 & |\p|
\end{bmatrix} \otimes \frac{1}{(|\q|^2-1)|\q|}\begin{bmatrix}
|\q| & -|\q|\\
-1 & |\q|^2+1 & -1\\
& -|\q| & |\q|
\end{bmatrix}\right)\\
&~~~~~~~~~~~~\times \begin{bmatrix}
1\\ & q-1\\ && 1\\ &&& 1\\ &&&& q-1\\ &&&&& 1
\end{bmatrix}^{-1}\\
& = \frac{1}{(|\p|^2-1)(|\q|^2-1)|\q|}\times\\
&\begin{bmatrix}
|\p\q| & -|\p\q| &  & -|\q| & |\q| & \\
-|\p| & |\p|(|\q|^2+1) & -|\p| & 1 & -(|\q|^2+1) & 1\\
 & -|\p\q| & |\p\q| &  & |\q| & -|\q|\\
-|\q| & |\q| &  & |\p\q| & -|\p\q| & \\
1 & -(|\q|^2+1) & 1 & -|\p| & |\p|(|\q|^2+1) & -|\p|\\
 & |\q| & -|\q| &  & -|\p\q| & |\p\q|
\end{bmatrix}\begin{bmatrix}
1\\ & q-1\\ && 1\\ &&& 1\\ &&&& q-1\\ &&&&& 1
\end{bmatrix}^{-1}.
\end{align*}
We compute that
\begin{align*}
&g([0]-[\infty]) = \prod_{\substack{\mathfrak{a}\mid\n\\ \mathfrak{a}\in A_+}}\Delta_{\mathfrak{a}}^{r_{\mathfrak{a}}}\otimes \frac{1}{(|\p|^2-1)(|\q|^2-1)|\q|}\\
&= (\Delta^{|\p\q|}\Delta_{\q}^{-|\p|-1}\Delta_{\q^2}^{|\q|}\Delta_{\p}^{-|\q|}\Delta_{\p\q}^{|\p|+1}\Delta_{\p\q^2}^{-|\p\q|})\otimes \frac{1}{(|\p|^2-1)(|\q|^2-1)|\q|}.
\end{align*}
Observe that
\begin{enumerate}
    \item $\sum_{\substack{\mathfrak{a}\mid\n\\\mathfrak{a}\in A_+}}r_{\mathfrak{a}}\cdot |\mathfrak{a}| = -|\q|(|\p|^2-1)(|\q|^2-1)\equiv 0\mod (q-1)(q^2-1)$.
    \item $\sum_{\substack{\mathfrak{a}\mid\n\\\mathfrak{a}\in A_+}}r_{\mathfrak{a}}\cdot |\frac{\n}{\mathfrak{a}}| = |\q|(|\p|^2-1)(|\q|^2-1)\equiv 0\mod (q-1)(q^2-1)$.
    \item $\prod_{\substack{\mathfrak{a}\mid\n\\\mathfrak{a}\in A_+}}\mathfrak{a}^{r_{\mathfrak{a}}} = \p^{-(|\p|+1)(|\q|-1)}\q^{-2|\q|(|\p|-1)}\in (K^\ast)^{q-1}$.
\end{enumerate}
It follows that, if Conjecture \ref{Kevin's conjecture} is true, then $$\ord(\overline{[0]-[\infty]}) = |\q|\frac{(|\p|^2-1)(|\q|^2-1)}{(q-1)(q^2-1)}.$$
\end{Ex}

\section{Applications} \label{Sec: Applications}
As applications of Theorem \ref{Main Theorem} and the map $g$, we prove the following.

\begin{proof} [Proof of Theorem \ref{Evidence for Kevin's Conjecture}]
Let $\n = \p^r$ or $\n = \p\q$, where $\p$ and $\q$ are distinct primes in $A_+$ and $r\geq 1$. Consider $g: \Div_{\text{cusp}}^0(X_0(\n))(K) \rightarrow \mathcal{O}(X_0(\n))^\ast \displaystyle\tens_{\mathbb{Z}} \mathbb{Q}$.

When $\n = \p^r$, by \cite{ho_rational_2024}, roots of Delta quotients as modular units on $X_0(\n)$ are generated by $$\{g(\ord(\overline{C})\cdot C)\mid C \in \mathcal{B}\},$$ where $m:=\lfloor \frac{r-1}{2}\rfloor$ and $$\mathcal{B} := \{C_i,~C_j-|\p| C_{j+1},~D_{r-1},~D_0 \mid \substack{1\leq i\leq m\\ m+1\leq j\leq r-2}\};$$ see notation in \cite[Theorem 3.5]{ho_rational_2024}.

Similarly, when $\n = \p\q$, by \cite{papikian_eisenstein_2015}, roots of Delta quotients as modular units on $X_0(\n)$ are generated by
$$\{g(\ord(\overline{C})\cdot C)\mid C \in \mathcal{B}'\},$$ where $$\mathcal{B}' := \{c_{(-,-)},~c_{(-,+)},~c_{(+,-)},~c_{\p},~c_{\q}\};$$ see notation in \cite[Proposition 6.10 and 6.11]{papikian_eisenstein_2015}.

For each $C\in \mathcal{B}$ or $\mathcal{B}'$, one checks that $g((q-1)(q^2-1)\cdot\ord(\overline{C})\cdot C)$ is a Delta quotient, which completes the proof.
\end{proof}

\begin{proof}[Proof of Lemma \ref{lem: degree of the rational cuspidal divisor}]
Consider $C_d := (P(\n)_d)-\deg(P(\n)_d)\cdot[\infty]\in \Div_{\cusp}^0(X_0(\n))(K)$ with $$g(C_d) = \prod_{\substack{\mathfrak{a}|\n\\ \mathfrak{a}\in A_+}}\Delta_{\mathfrak{a}}^{r_{\mathfrak{a}}}\otimes \frac{1}{(q-1)\prod_{i=1}^s (|\p_i|^2-1)|\p_i|^{r_i-1}}.$$ If there exists $i$ with $0 < h_i < r_i$, then
\begin{align*}
0 = \sum_{\substack{\mathfrak{a}|\n\\ \mathfrak{a}\in A_+}}r_{\mathfrak{a}} &= \prod_{\substack{i=1\\ h_i = 0\text{~or~}r_i}}^s(|\p_i|-1)\prod_{\substack{i=1\\ 0 < h_i < r_i}}^s(|\p_i|-1)^2|\p_i|^{\min\{h_i, r_i-h_i\}-1}\\
&-\deg(P(\n)_d)\cdot (q-1)\prod_{i=1}^s(|\p_i|-1).
\end{align*}
Otherwise, we have
$$0 = \sum_{\substack{\mathfrak{a}|\n\\ \mathfrak{a}\in A_+}}r_{\mathfrak{a}} = (q-1)\prod_{i=1}^s(|\p_i|-1) - \deg(P(\n)_d)\cdot (q-1)\prod_{i=1}^s(|\p_i|-1).$$
\end{proof}

\begin{proof}[Proof of Theorem \ref{thm: exponent of C(n)}]
Consider $C\in \Div_{\text{cusp}}^0(X_0(\n))(K)$ with $$g(C) = \prod_{\substack{\mathfrak{a}\mid \n\\\mathfrak{a}\in A_+}}\Delta_\mathfrak{a}^{r_\mathfrak{a}}\otimes \frac{1}{(q-1)\prod_{i=1}^s (|\p_i|^2-1)|\p_i|^{r_i-1}}.$$ From our results in Section \ref{Sec: The transformation law} and Section \ref{Sec: Proof of  Theorem}, there is always a $(q-1)^2$th root $f$ (up to constant multiple) of $\prod_{\substack{\mathfrak{a}\mid \n\\\mathfrak{a}\in A_+}}\Delta_\mathfrak{a}^{r_\mathfrak{a}}$ in $\mathcal{O}(\Omega)^\ast$, where $f^{q-1}\in \mathcal{O}(X_0(\n))^\ast$, i.e., $f^{q-1}$ is invariant under $\Gamma_0(\n)$. We observe that
\begin{align*}
g\left(\left(\prod_{i=1}^s (|\p_i|^2-1)|\p_i|^{r_i-1}\right) \cdot C\right) &= (\text{const.~}f^{q^2-1})\otimes \frac{1}{q-1}\\ &= (\text{const.~}f^{q-1})\otimes 1.
\end{align*}
Thus, $\left(\prod_{i=1}^s (|\p_i|^2-1)|\p_i|^{r_i-1}\right) \cdot \overline{C} = 0$ in $\CC(\n)$.
\end{proof}

\bibliographystyle{acm}
\bibliography{Bibliography}

\end{document}